\crefname{section}{§}{§§}
\Crefname{section}{§}{§§}
\let \oldsection
\renewcommand{\section}{\vspace{8pt plus 4pt}\oldsection}
\newcommand{\beqa}{\begin{eqnarray*}}
\newcommand{\eeqa}{\end{eqnarray*}}
\newcommand{\beqn}{\begin{eqnarray}}
\newcommand{\eeqn}{\end{eqnarray}}
\newcommand{\R}{\mathbb R}
\newcommand{\N}{\mathbb N}
\newcommand{\mcP}{\mathcal P}
\definecolor{olive}{rgb}{0.3, 0.4, .1}
\definecolor{fore}{RGB}{249,242,215}
\definecolor{back}{RGB}{51,51,51}
\definecolor{title}{RGB}{255,0,90}
\definecolor{dgreen}{rgb}{0.,0.6,0.}
\definecolor{gold}{rgb}{1.,0.84,0.}
\definecolor{JungleGreen}{cmyk}{0.99,0,0.52,0}
\definecolor{BlueGreen}{cmyk}{0.85,0,0.33,0}
\definecolor{RawSienna}{cmyk}{0,0.72,1,0.45}
\definecolor{Magenta}{cmyk}{0,1,0,0}
\newtheorem{thm}{Theorem}[section]
\newtheorem{prop}[thm]{Proposition}
\theoremstyle{definition}
\newtheorem{defn}{Definition}[section]
\theoremstyle{remark}
\numberwithin{equation}{section}
\begin{document}
\begin{center}\large{{\bf{ On Geometric properties  of  Henstock-Orlicz spaces}}} \\
\vspace{0.5cm}
 
\footnotesize     Hemanta Kalita$^{1}$,  Salvador S\'anchez Perales$^{2}$ and Bipan Hazarika$^{3, \ast}$ \\
\footnotesize $^{1}$Department of Mathematics, Gauhati University,
Guwahati 781014, Assam, India\\
Email:  hemanta30kalita@gmail.com\\ 
\footnotesize $^{2}$Universidad Tecnol\'ogica de la Mixteca, Instituto de F\'isica y Matem\'aticas, Km. 2.5 Carretera a Acatlima, 69000 Oaxaca, Mexico.\\
Email: es21254@yahoo.com.mx	\\
\footnotesize $^{3}$Department of Mathematics, Gauhati University, Guwahati 781014, Assam, India\\
Email: bh\_rgu@yahoo.co.in; bh\_gu@gauhati.ac.in  

\end{center}
\title{}
\author{}
\thanks{$^{\ast}$The corresponding author}
\thanks{\today} 
\begin{abstract}
In this paper we extend the theory of Henstock-Orlicz spaces with respect to vector measure. We study the integral representation of operators. Lastly we study Uniformly convexity, reflexivity and the Radon-Nikodym property of the Henstock-Orlicz spaces $\mathcal{H}^\theta(\mu_\infty).$ 
\\
\noindent{\footnotesize {\bf{Keywords and phrases:}}} Banach function space, Uniformly convex, Reflexive, Radon-Nikodym
property .\\
{\footnotesize {\bf{AMS subject classification \textrm{(2020)}:}}} 46E30, 46B20, 46B22, 46A80.
\end{abstract}
\maketitle

\maketitle

\pagestyle{myheadings}
\markboth{\rightline {\scriptsize    Hemanta, Salvador, Bipan}}
        {\leftline{\scriptsize  }}

\maketitle
\section{Introduction and preliminaries}
In the early $19^{th}$ century, Lebesgue's Theory of integration has taken a center stage in concrete problem of analysis. It was seen as early as 1915 with the publications of de-la Vall\`ee Poussin, although it is the Banach space research of 1920's that formally gave birth to what are later called the Orlicz spaces, first proposed by Z.W. Birnbaum and W. Orlicz. Later on this space was further developed by Orlicz himself. In the monograph \cite{Ma}, Kransoselskii and  Rutickii devoted entirely to Orlicz spaces. We refer \cite{Th,etaltin,Mm,BP,Tv,bcrg} for detailed of Orlicz space. Brooks and Dinculeanu have developed a theory of vector integration for a bounded family of measures (see \cite{BD}). In \cite{Roy}, Roy and Chakraborty developed a theory of Orlicz spaces for the case of Banach space valued functions with respect to a $\sigma-$bounded family of measures. In \cite{Roy1} Roy and Chakraborty developed integral representation as an application of their previous work of \cite{Roy}. In \cite{AK,AK1}, Kaminska discussed criteria for uniform convexity of Orlicz spaces in the case of a non atomic measure as well as in the case of a purely atomic measure. It is known that,  if $f$ is  bounded  with compact support, then following are equivalent:
       \begin{enumerate}
       	\item[(a)] $f$ is Henstock–Kurzweil integrable, 
       	\item[(b)]	$f$ is Lebesgue integrable,
       	\item[(c)] 	$f$ is Lebesgue measurable.
       \end{enumerate}
       In general, every Henstock–Kurzweil integrable function is measurable, and $f$ is Lebesgue integrable if and only if both $f$ and $|f|$ are Henstock–Kurzweil integrable. This means that the Henstock–Kurzweil integral can be thought of as a ``non-absolutely convergent version of Lebesgue integral". We refer the detailed of Henstock-Kurzweil integral found in \cite{Ra,Rh,Rh1,Kurzweil,Cs,Bs,lee}.
We developed Henstock-Orlicz spaces in \cite{HK}. In this paper we developed a theory of Henstock-Orlicz spaces for the vector valued functions with respect to a $\sigma-$bounded family of measures. Detailed of vector measure can found in \cite{BHK,BD,JD,KH}. Throughout this paper, $\Sigma$ denotes a $\sigma-$algebra of subsets of an abstract set $T \neq \emptyset.~ \mathcal{P}(T)$ be the class of all subset of $T,~\Sigma \subset \mathcal{P}(T)$ be a $\sigma-$algebra, $X$ be a Banach space, and $X^{\prime}$ be its topological dual. For each $A \in \Sigma,$ the characteristic function of $A$
\begin{eqnarray*}
ch_A(t)=\left\{
\begin{array}{ll}
1 & \text{if  } t \in A, \\ 
0 & \text{if  } t \in T \setminus A.
\end{array}\right.
\end{eqnarray*}
Recalling,  vector measure is a $\sigma-$additive set function $\mu_\infty: \Sigma \to X$, the $\sigma-$additivity of $\mu_\infty$ is equivalent to the $\sigma-$additivity of the scalar-valued set functions $x^{\prime}\mu_\infty: A \to x^{\prime}(\mu_\infty(A))$ on $\Sigma$ for every $x^{\prime} \in X^{\prime}.$
\begin{defn}
\cite{BHK} The variation $|\mu_\infty|$ of $\mu_\infty$ is defined by 
\begin{eqnarray*}
|\mu_\infty|(A)= \sup\left\{\sum_{i=1}^{r}||\mu_\infty(A_i)||~: A_i \in \Sigma,~i=1,2,..,r; A_i \cap A_j = \emptyset ~{\text{for}}~i \neq j; ~\bigcup\limits_{i=1}^{r}A_i \subset A\right\}.
\end{eqnarray*}
\end{defn}
\noindent The semi variation $||\mu_\infty||$ of $\mu_\infty$ by 
\begin{eqnarray*}
||\mu_\infty||(A)= \sup\limits_{x^{\prime} \in X^{\prime},||x^{\prime}||\leq1}|x^{\prime}\mu_\infty|(A).
\end{eqnarray*}
\noindent A function $f:T \to \mathbb{R}$ is said to be $\mu_\infty$-measurable if 
\begin{eqnarray*}\label{mumeasurability}
f^{-1}(B) \cap \{ t \in T: f(t) \neq 0 \}\in 
\Sigma 
\end{eqnarray*}
for each Borel subset $B \subset \mathbb{R}$.
\begin{defn}
\cite{BHK} 
We say that
a $\Sigma$-measurable function $f:T \to \mathbb{R}$ 
is Kluv\'{a}nek-Lewis-Henstock-Kurzweil 
$\mu_\infty$-integrable, shortly $(HKL)$ $\mu_\infty$-integrable if 
the following properties hold: 
\begin{eqnarray}\label{first}
f \text{   is   } 
|x^{\prime} \mu_\infty|-\text{Henstock-Kurzweil integrable  for each }  
x^{\prime} \in X^{\prime},
\end{eqnarray} and for every $A \in \Sigma$ there is  $x_A^{(HK)} \in X$ with
\begin{eqnarray}\label{second} 
 x^{\prime}(x_A^{(HK)})=
(HK) \int_{A}f \, d |x^{\prime}\mu_\infty|
\text{    for  all  }
x^{\prime} \in X^{\prime}, 
\end{eqnarray}
where the symbol $(HK)$ in 
denote the usual  Henstock-Kurzweil integral
of a real-valued function with respect to an 
(extended) real-valued measure.
\end{defn}
 
 \begin{defn}
 \cite{Roy} A function $f: T \to X $ is $M-$measurable if there is a sequence of simple functions from $\mathcal{S}_X(\Sigma)$ converges to $f$ M-a.e.
 \end{defn} 
 $M$ denotes a $\sigma-$bounded family of positive measures defined on $\Sigma.$ This means for each $E \in \Sigma,$ there exists a pairwise disjoint collections $\{E_i\}_{i=1}^{\infty},~E_i \in \Sigma$ such that $E= \bigcup\limits_{i=1}^{\infty}E_i $ and  $\mathcal{S}_X(E)$ is a set of all $X$ valued simple functions. If $f$ is $M-$measurable, then $f$ is $\mu_\infty$-measurable for each $\mu_\infty \in M,$ we define $M(f)=\sup\limits_{\mu_\infty \in M}H\int_{T}||f||d|x^{\prime}\mu_\infty|.$ In this paper we consider all functions are $M-$ measurable. Let $\mathcal{M}_X(M)$ denote the space of all $M-$ measurable functions $f:T \to X$ for which $M(f) < \infty,$ then $\mathcal{M}_X(M)$ is complete with respect to the semi norm $M(.).$ If $M(f)< \infty,$ then $f$ is integrable in $\mathcal{M}_X(E).$ 
	\begin{defn}\cite{Roy1}
Let $\chi:[0,\infty] \to [0, \infty]$ be a non-decreasing left continuous function such that $\chi(0)=0.$ The inverse function $\xi$ of $\chi$ is defined by $\xi(0)=0$ and $\xi(v)= \sup\{u:~\chi(u)<v\}.$ If $\lim\limits_{u \to \infty}\chi(u)=A<\infty$ then $\xi(v)=\infty$ for all $v> A.$ The functions defined below 
\begin{eqnarray*}
\theta(u) = \int_{0}^{u}\chi(t)dt\mbox{~and~} \phi(v)= \int_{0}^{v}\xi(t)dt
\end{eqnarray*}
are called conjugate Young's function and they satisfy the Young's inequality
\begin{eqnarray*}
uv \leq \theta(u)+\phi(v),~u,v \geq 0.
\end{eqnarray*}
\end{defn}
\section{Henstock-Orlicz space and vector measure}
If $f \in \mathcal{H}^\theta(|x^{\prime}\mu_\infty|),$ then there is a $k>0$ such that $\theta(\frac{f}{k}) \in H(|x^{\prime}\mu_\infty|)=L^1(|x^\prime \mu_\infty|),$ where $H(|x^\prime \mu_\infty|) $ and $L^1(|x^\prime \mu_\infty|)$ are Henstock integrable space and Lebesgue integrable space. The space $\mathcal{H}^\theta(|x^{\prime}\mu_\infty|)$ endowed with the Lexemburg norm
\begin{eqnarray}\label{hk}
||f||_{\mathcal{H}^\theta}= \inf\left\{k>0~: ~H\int_{T}\theta\left(\frac{f}{k}\right)d|x^{\prime}\mu_\infty| \leq 1 \right\}
\end{eqnarray}
is a Banach space. 
\begin{defn}
For an $M-$measurable function $f: T \to X ,$ let us define
\begin{eqnarray}\label{hk1}
||f||_{\theta, M}= \sup\limits_{\mu_\infty \in M} \sup\limits_{g \in \mathcal{S}(\Sigma)}\left\{H\int_{T}||fg||d|x^{\prime}\mu_\infty|~: M_\phi(g) \leq 1\right\}.
\end{eqnarray}
\end{defn}
\noindent Clearly (\ref{hk}) and (\ref{hk1}) are equivalent.
\begin{defn}
\cite{Roy1} If $X$ and $Y$ are two Banach spaces and $\mu_\infty:\Sigma \to L(X,Y)$ a countable additive measure, then $(\theta, M)-$variation of $\mu_\infty$ is defined by 
\begin{eqnarray*}
|\mu_\infty|_{\theta, M}(E)= \sup\sum_{i=1}^{n}||\mu_\infty(E_i)x_i||~{\text{for each~}} E \in \Sigma.
\end{eqnarray*}
\end{defn}
Here the supremum is taken over all simple functions $f=\sum\limits_{i=1}^{n}x_ich(E_i)$ belonging to $\mathcal{S}_X(\Sigma)$ with pairwise disjoint $E_i \subset E$ and $M_\theta(f)\leq 1.$ The $(\theta, M)$ semi-variation of $\mu_\infty,$ for $E \in \Sigma$ by 
\begin{eqnarray*}
||\mu_\infty||_{\theta, M}(E)= \sup||\sum_{i=1}^{n}\mu_\infty(E_i)x_i||,
\end{eqnarray*}
 where the supremum is taken over all simple functions $f= \sum\limits_{i=1}^{n}x_ich(E_i)$ belonging to $\mathcal{S}_X(\Sigma)$ with pairwise disjoint $E_i \subset E$ and $M_\theta(f) \leq 1.$
 \begin{defn}\cite{Roy1} We call $\theta \in \Delta_2 $ if $\theta(2t) \leq k\theta(t),~k>0.$
 \end{defn}
 
\begin{thm}
$\mathcal{H}^\theta(M,X) $ is complete with respect to $||.||_{\theta, M}.$
\end{thm}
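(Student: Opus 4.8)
The plan is to prove completeness via the classical criterion that a normed space is complete precisely when every absolutely convergent series converges, realized here by extracting a rapidly convergent subsequence from an arbitrary Cauchy sequence and summing its increments. The \emph{decisive} preliminary observation is that every integrand occurring in (\ref{hk}) and (\ref{hk1}) is nonnegative: $\theta(||f||/k)\ge 0$ in the Luxemburg modular, and $||fg||=|g|\,||f||\ge 0$ in (\ref{hk1}) (the test functions $g\in\mathcal{S}(\Sigma)$ being scalar simple functions). On nonnegative integrands the Henstock--Kurzweil integral agrees with the Lebesgue integral, so the monotone and dominated convergence theorems are at our disposal and the whole argument can be made to parallel the proof of completeness of ordinary Orlicz spaces, with the $\sigma$-boundedness of $M$ supplying the $\sigma$-finiteness needed at one key point.

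First I would take a Cauchy sequence in $(\mathcal{H}^\theta(M,X),||\cdot||_{\theta,M})$ and pass to a subsequence $(f_n)$ with $||f_{n+1}-f_n||_{\theta,M}<2^{-n}$. Put $g_n=f_{n+1}-f_n$ and $h=\sum_{n=1}^{\infty}||g_n||$, a nonnegative scalar function, with partial sums $h_N=\sum_{n=1}^{N}||g_n||\uparrow h$. Note that $||\cdot||_{\theta,M}$ is subadditive directly from (\ref{hk1}), since $||(u+v)g||\le||ug||+||vg||$ and a supremum of subadditive functionals is subadditive. For each fixed $\mu_\infty\in M$ and each $g$ with $M_\phi(g)\le 1$ one has $||g_n g||=|g|\,||g_n||$, so $H\int_T h_N|g|\,d|x'\mu_\infty|=\sum_{n=1}^{N}H\int_T||g_n g||\,d|x'\mu_\infty|\le\sum_{n=1}^{N}||g_n||_{\theta,M}<1$; letting $N\to\infty$ and applying the monotone convergence theorem to this single scalar integral gives $H\int_T||hg||\,d|x'\mu_\infty|\le 1$, and taking the supremum over $\mu_\infty$ and $g$ yields $||h||_{\theta,M}\le 1$.

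Next I would deduce that $h<\infty$ $M$-a.e. If instead $h=\infty$ on a set of positive $|x'\mu_\infty|$-measure for some $\mu_\infty\in M$, then by the $\sigma$-boundedness of $M$ that set contains a subset $A$ of finite positive measure; choosing $g=c\,ch_A$ with $c>0$ small enough that $M_\phi(g)\le 1$ forces $H\int_T||hg||\,d|x'\mu_\infty|=c\,H\int_A h\,d|x'\mu_\infty|=\infty$, contradicting $||h||_{\theta,M}\le 1$. Hence $\sum_n g_n(t)$ converges absolutely in the Banach space $X$ for $M$-a.e.\ $t$, so $f:=f_1+\sum_n g_n=\lim_n f_n$ is well defined $M$-a.e.\ and $M$-measurable as an $M$-a.e.\ limit of $M$-measurable functions. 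The same monotone-convergence estimate applied to the tail, using $||(f-f_K)g||=|g|\,||f-f_K||\le|g|\sum_{n\ge K}||g_n||$, gives $||f-f_K||_{\theta,M}\le\sum_{n\ge K}||g_n||_{\theta,M}\to 0$; writing $f=f_K+(f-f_K)$ shows $f\in\mathcal{H}^\theta(M,X)$ and that the subsequence converges to $f$ in norm. Since the original sequence is Cauchy, it too converges to $f$.

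The step I expect to be the main obstacle is the passage from finiteness of $||h||_{\theta,M}$ to finiteness of $h$ almost everywhere, together with the requirement that all convergence arguments be uniform over the family $M$. The delicate point is that (\ref{hk1}) carries two suprema—over $\mu_\infty\in M$ and over admissible $g$—and a supremum need not commute with a limit; my remedy is to bound each scalar integral $H\int_T h_N|g|\,d|x'\mu_\infty|$ uniformly in $(\mu_\infty,g)$ \emph{before} passing to the limit, as above, so that monotone convergence is only ever applied to one integral at a time. Should the direct estimates prove awkward, the stated equivalence of (\ref{hk}) and (\ref{hk1}) allows the argument to be transported to the Luxemburg modular $f\mapsto\sup_{\mu_\infty\in M}H\int_T\theta(||f||/k)\,d|x'\mu_\infty|$, which is monotone and admits Fatou's lemma directly; and the reduction to finite-measure sets through the $\sigma$-bounded decomposition $E=\bigcup_i E_i$ is exactly what legitimizes the choice $g=c\,ch_A$ in the a.e.-finiteness step.
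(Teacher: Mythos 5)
Your proposal is correct, but it follows a genuinely different route from the paper's own argument. The paper's proof leaves the space $\mathcal{H}^\theta(M,X)$: starting from a Cauchy sequence in $||\cdot||_{\theta,M}$, it uses the $\sigma$-bounded decomposition $T=\bigcup_i E_i$ to build admissible test functions $g=t\,ch(E_i)$ with $M_\phi(g)\le 1$, deduces that the sequence is Cauchy for the seminorm $M(\cdot)$, and then invokes, as a black box, the completeness of the auxiliary seminormed space of $M$-measurable functions (stated in the preliminaries, following Roy--Chakraborty) to obtain an $M$-a.e.\ limit $f$ of a rapidly convergent telescoping series, before returning to the $||\cdot||_{\theta,M}$-estimate. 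You instead run a self-contained Riesz--Fischer argument entirely inside $\mathcal{H}^\theta(M,X)$: the norm bound $||h||_{\theta,M}\le 1$ for $h=\sum_n||g_n||$ is obtained by applying monotone convergence to one scalar integral at a time before taking the two suprema in (\ref{hk1}), and $\sigma$-boundedness enters only once, through the test function $g=c\,ch_A$, to convert that norm bound into $M$-a.e.\ finiteness of $h$. The two proofs share their combinatorial core---$\sigma$-boundedness supplying simple functions supported on finite-measure pieces, and a telescoping series producing the a.e.\ limit---but your version buys three things: it does not rely on the completeness of the auxiliary space; it makes explicit the interchange of limits with the double supremum, a point the paper's proof glosses over; and it yields the quantitative tail estimate $||f-f_K||_{\theta,M}\le\sum_{n\ge K}||g_n||_{\theta,M}$, which is what actually upgrades a.e.\ convergence to norm convergence (the corresponding step in the paper is asserted rather than estimated). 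The only extra obligation you incur is the monotone convergence theorem for Henstock--Kurzweil integrals of nonnegative integrands, which is legitimate here and consistent with the paper's own identification $H(|x^{\prime}\mu_\infty|)=L^1(|x^{\prime}\mu_\infty|)$ for such integrands.
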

\begin{proof}
Suppose $(f_n)_{n=1}^{\infty}$ is a Cauchy sequence in $(\mathcal{H}^\theta(M,X),||.||_{\theta, M})$, then $||f_n -f_m||_{\theta, M} \leq \frac{1}{2}$ for $m>n.$ In general $n_k,~k \in \N$ such that $||f_n-f_m||_{\theta,M} < \frac{1}{2^k}$ for $m>n.$ Since $M$ is a $\sigma-$bounded, we can write $T= \bigcup\limits_{i=1}^{\infty}E_i,~E_i \cap E_j = \emptyset$ for $i \neq j$ and $M(E_i)< \infty$ for each $i=1,2,\dots,$ let $t>0$ be such that $M(E_i)\phi(t) \leq 1.$ If $g(x)=t $ on $E_i$ and $g(x)=0$ otherwise, then we have $M_\phi(g) \leq 1 $ and consequently $M(f_n -f_m) < \epsilon$ for $n,m \geq M_0$ so, $M(f_n -f_m) < \frac{\epsilon}{t}$  for $n,m \geq M_0.$ This gives $\{f_n\}$ is a Cauchy sequence in $\mathcal{S}_X(M).$ As $(f_n)$ is Cauchy sequence in $\mathcal{S}_X(M)$ then there exist a positive integer $M_1$ and a function $f_{M_1} \in \mathcal{S}_X(M)$ such that $M(f_n -f_{M_1}) < \frac{1}{2}$ for $ n \geq M_1.$ Similarly on $E_2,~M(f_n -f_{M_2})<\frac{1}{2^2}$  for $n \geq M_2$ and so on. The series $M(f_{M_1}) + M(f_{M_2}-f_{M_1}) +\dots$ is convergent. The fact $\mathcal{S}_X(M)$ is complete gives $(f_{M_1}) + (f_{M_2}-f_{M_1}) +\dots$ is converges a.e. to a function, say $f$ in $\mathcal{S}_X(M),$ so, $M(f_n -f)(g) < \epsilon$ whenever $n \geq M_0$ and $M_\phi(g) \leq 1.$ Thus $f_n -f \in \mathcal{H}^\theta(M,X)$ we have $ f \in \mathcal{H}^\theta(M,X).$ So $(\mathcal{H}^\theta(M,X), ||.||_{\theta, M})$ is complete.
\end{proof}
\begin{thm}\label{first}
Let $f_n \in \mathcal{H}^\theta(M,X).$ For $n=1,2,\dots$ then followings are equivalent:
\begin{enumerate}
\item $f \in \mathcal{H}^\theta(M,X)$ and $||f_n -f||_{\theta, M} \to 0 $ as $n \to \infty.$
\item $||f_n -f_m||_{\theta, M} \to 0 $ as $m,n \to \infty$ and hence there exists a subsequence $(f_{n_k}) $ of $(f_n) $ such that $f_{n_k} \to f ~M-$a.e.
\end{enumerate}
\end{thm}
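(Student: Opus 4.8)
The plan is to prove the two implications separately, leaning on the preceding completeness theorem and on the test-function description of $||\cdot||_{\theta,M}$ recorded in (\ref{hk1}).

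First I would treat $(1)\Rightarrow(2)$. The Cauchy condition is immediate from the triangle inequality, since $||f_n-f_m||_{\theta,M}\le ||f_n-f||_{\theta,M}+||f-f_m||_{\theta,M}\to 0$. To produce the almost-everywhere convergent subsequence, I would transfer norm convergence to convergence in the semi-norm $M(\cdot)$ on each piece of finite measure, exactly as in the completeness argument. Using $\sigma$-boundedness, write $T=\bigcup_{i=1}^{\infty}E_i$ with the $E_i$ pairwise disjoint and $M(E_i)<\infty$, and for a fixed $i$ pick $t>0$ with $M(E_i)\phi(t)\le 1$; then $g=t\,ch_{E_i}$ satisfies $M_\phi(g)\le 1$, so (\ref{hk1}) gives $t\,H\int_{E_i}||f_n-f||\,d|x^{\prime}\mu_\infty|\le ||f_n-f||_{\theta,M}$ for every $\mu_\infty\in M$. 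Hence $f_n\to f$ in the semi-norm $M(\cdot)$ restricted to each $E_i$, which forces convergence in measure on $E_i$; a standard extraction then yields a subsequence converging $M$-a.e. on $E_i$, and a diagonal argument over $i$ produces a single subsequence $(f_{n_k})$ with $f_{n_k}\to f$ $M$-a.e.

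For $(2)\Rightarrow(1)$, the Cauchy sequence $(f_n)$ has a limit $f\in\mathcal{H}^\theta(M,X)$ with $||f_n-f||_{\theta,M}\to 0$ directly by the completeness theorem proved above; the almost-everywhere convergent subsequence is precisely the one constructed there, where the $\sigma$-bounded decomposition together with the completeness of $\mathcal{S}_X(M)$ produces an a.e. limit on each $E_i$ that is then patched together into a single function agreeing with the norm limit $f$.

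The main obstacle I expect lies in the passage between the Orlicz-type norm $||\cdot||_{\theta,M}$ and almost-everywhere behaviour: the norm only controls a supremum over test functions $g$, so to reach a.e. convergence one must localise to the finite-measure pieces $E_i$ and choose $g$ so that $M_\phi(g)\le 1$ while still extracting a usable lower bound on $H\int_{E_i}||f_n-f||\,d|x^{\prime}\mu_\infty|$. Making the diagonal extraction uniform across the countably many $E_i$, and verifying that the resulting $M$-a.e. limit coincides with the norm limit $f$, is where the care is needed.
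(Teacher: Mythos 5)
Your proposal is correct. The paper gives no argument of its own for this theorem---its ``proof'' is the single line that the proofs are similar to Theorem 3.3 of \cite{Roy}---and what you have written (Cauchy via the triangle inequality; localization through the $\sigma$-bounded decomposition $T=\bigcup_{i}E_i$ with test functions $g=t\,ch(E_i)$ satisfying $M_\phi(g)\le 1$, so as to pass from $||\cdot||_{\theta,M}$ to the seminorm $M(\cdot)$; convergence in measure with a Borel--Cantelli extraction and a diagonal argument over the $E_i$; the completeness theorem for the converse) is exactly the technique of that reference and of the completeness proof given immediately before this statement, so you have reconstructed the intended argument in essentially the same way.
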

\begin{proof}
Proofs are similar as Theorem 3.3 of \cite{Roy}.
\end{proof}
\begin{defn}
We denote $H(M,X) $ is the collection of all functions $f: T \to X$ which are $M-$measurable and for which $\theta(||f||) \in H(M,\R)$ where $H(M,\R) $ are Henstock-Kurzweil integrable function space.
\end{defn}
\begin{thm}\label{thm}
If $f \in \mathcal{H}^\theta(M,X) $ there exists a constant $N>0$ such that $Nf \in H(M,X).$ Moreover if $f \in \mathcal{H}^\theta(M,X)$ and $f(x) \neq 0$ then $$\sup\limits_{\mu_\infty \in M}H\int_{T}\theta\left(\frac{||f||}{||f||_{\theta,M}}\right)d|x^{\prime}\mu_\infty|\leq 1.$$
\end{thm}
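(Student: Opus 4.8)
The statement splits into two independent assertions, which I would treat in turn.

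The first assertion follows by unwinding the definition of membership in $\mathcal{H}^\theta(M,X)$. By definition, $f \in \mathcal{H}^\theta(M,X)$ means that $f$ is $M$-measurable and that there is some $k>0$ with $\sup_{\mu_\infty \in M} H\int_T \theta(||f||/k)\, d|x^{\prime}\mu_\infty| < \infty$. Since $\theta(||f||/k) \geq 0$, its Henstock-Kurzweil integrability against each $|x^{\prime}\mu_\infty|$ coincides with Lebesgue integrability (for nonnegative functions the two notions agree, as recalled in the introduction), so finiteness of the supremum is exactly the statement $\theta(||f||/k) \in H(M,\R)$. Taking $N = 1/k$ gives $||Nf|| = ||f||/k$, hence $\theta(||Nf||) = \theta(||f||/k) \in H(M,\R)$, which is precisely $Nf \in H(M,X)$.

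For the second assertion, set $\lambda = ||f||_{\theta, M}$. Since $f$ is not null we have $\lambda > 0$, and since $f \in \mathcal{H}^\theta(M,X)$ we have $\lambda < \infty$; thus $\lambda \in (0,\infty)$ and $||f||/\lambda$ is well defined. Using the equivalence of (\ref{hk}) and (\ref{hk1}), I would read $\lambda$ as the Luxemburg infimum $\inf\{k>0 : \sup_{\mu_\infty \in M} H\int_T \theta(||f||/k)\, d|x^{\prime}\mu_\infty| \leq 1\}$. The plan is then to pick a sequence $k_n \downarrow \lambda$ with $k_n > \lambda$ and $\sup_{\mu_\infty \in M} H\int_T \theta(||f||/k_n)\, d|x^{\prime}\mu_\infty| \leq 1$ for every $n$, and to let $n \to \infty$. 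Because $\theta$ is continuous and non-decreasing and $k_n \downarrow \lambda$, the functions $\theta(||f||/k_n)$ increase pointwise to $\theta(||f||/\lambda)$. Fixing $\mu_\infty \in M$, the integrals $H\int_T \theta(||f||/k_n)\, d|x^{\prime}\mu_\infty|$ form a bounded increasing sequence, so by the monotone convergence theorem for the Henstock-Kurzweil integral the limit $\theta(||f||/\lambda)$ is integrable with $H\int_T \theta(||f||/\lambda)\, d|x^{\prime}\mu_\infty| = \lim_n H\int_T \theta(||f||/k_n)\, d|x^{\prime}\mu_\infty| \leq 1$. As this holds for every $\mu_\infty \in M$, taking the supremum over $M$ yields the desired inequality.

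The main obstacle I anticipate is the interchange of limit and integral: it must be justified by the monotone convergence theorem in the Henstock-Kurzweil setting rather than the Lebesgue one, and I should check its hypotheses (increasing pointwise convergence together with the uniform bound $\leq 1$ on the integrals). A secondary delicate point is the order of the limit in $n$ and the supremum over $M$; since the bound $\leq 1$ is uniform in $\mu_\infty$ before the limit is taken, it is legitimate to pass to the limit for each fixed $\mu_\infty$ first and only then take the supremum, which is what keeps the argument sound.
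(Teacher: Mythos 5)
Your treatment of the first assertion is fine: unwinding the definition of $\mathcal{H}^\theta(M,X)$ and taking $N=1/k$ is all that is required (the paper itself never really isolates this step), and your monotone-convergence argument in the HK setting is sound as far as it goes.

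The second assertion, however, has a genuine gap, and it sits exactly at the point you flag as ``the equivalence of (\ref{hk}) and (\ref{hk1})''. The norm $||f||_{\theta,M}$ in the statement is the one from (\ref{hk1}), a dual (Orlicz/Amemiya-type) supremum $\sup_{\mu_\infty}\sup_{g}\left\{H\int_T||fg||\,d|x^{\prime}\mu_\infty|:M_\phi(g)\leq 1\right\}$, not the Luxemburg infimum (\ref{hk}). The paper's remark that the two are ``equivalent'' can only mean equivalence as norms: in classical Orlicz theory these two expressions are genuinely different, satisfying only $||f||_{Lux}\leq ||f||_{Orl}\leq 2||f||_{Lux}$, so you are not entitled to ``read $\lambda$ as the Luxemburg infimum''. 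What your limiting argument actually proves is the modular inequality for the Luxemburg functional, $\sup_{\mu_\infty}H\int_T\theta(||f||/L)\,d|x^{\prime}\mu_\infty|\leq 1$ with $L$ the infimum in (\ref{hk}); to transfer this to $||f||_{\theta,M}$ you need the one-sided inequality $||f||_{\theta,M}\geq L$ with constant exactly $1$ (then monotonicity of $\theta$ finishes). But that inequality \emph{is} the nontrivial content of the theorem: the stated bound says precisely that $||f||_{\theta,M}$ belongs to the set whose infimum defines $L$, so assuming it is circular, and mere two-sided norm equivalence with constants gives nothing. This is why the paper's proof never passes through the Luxemburg norm: it works directly with (\ref{hk1}), first establishing the H\"older-type estimate $H\int_T||fg||\,d|x^{\prime}\mu_\infty|\leq ||f||_{\theta,M}\max\{1,\mu_{\infty_\phi}(g)\}$, then applying Young's \emph{equality} $u\chi(u)=\theta(u)+\phi(\chi(u))$ to $g=f/||f||_{\theta,M}$ and approximating $\chi(||g||)$ by simple functions so that the supremum defining (\ref{hk1}) can be invoked, first for bounded $f$ with support of finite measure and then by truncation. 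To repair your proof you must either supply that Young's-equality/H\"older step (at which point you have reproduced the paper's argument) or give an independent proof that the dual norm (\ref{hk1}) dominates the Luxemburg norm (\ref{hk}).
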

\begin{proof}
Let $f \in \mathcal{H}^\theta(M,X)$ and $g \in \mathcal{S}(\Sigma),$ we need to prove for each $\mu_\infty \in M$,
\begin{eqnarray*}
H\int_{T}\frac{||fg||}{||f||_{\theta,M}}d|x^{\prime}\mu_\infty| \leq \max\{1, \mu_{\infty_\phi}(g)\}
\end{eqnarray*}
If $\alpha>1 $ and $t>0$ then $\theta(\alpha t) \geq \alpha \phi(t)$. So, let $f \in \mathcal{H}^\theta(M,X) $ and $g \in \mathcal{S}(\Sigma)$ then 
\begin{eqnarray}\label{2.4}
H\int_{T}||fg||d|x^{\prime}\mu_\infty| \leq ||f||_{\theta,M}~{\textit{~provided~}}~\mu_{\infty_\phi} \leq 1
\end{eqnarray}
If $ 1 < \mu_{\infty_\phi}(g) < \infty,$ then 
\begin{align*}
\phi(|g|) &= \phi\left(\mu_{\infty_\phi}\frac{|g|}{\mu_{\infty_\phi}(g)}\right)\\&\geq \mu_{\infty_\phi}(g)\phi\left(\frac{|g|}{\mu_{\infty_\phi}(g)}\right).
\end{align*}
That is $\frac{\phi(|g|)}{\mu_{\infty_\phi}(g)}\geq \phi\left(\frac{|g|}{\mu_{\infty_\phi}(g)}\right).$
So,
\begin{align*}
H\int_{T}\phi\left(\frac{|g|}{\mu_{\infty_\phi}(g)}\right)d|x^{\prime}\mu_\infty|&\leq H\int_{T}\frac{\phi(|g|)}{\mu_{\infty_\phi}(g)}d|x^{\prime}\mu_\infty|\\&\leq 1.
\end{align*}
So, $H\int_{T}\frac{||fg||}{\mu_{\infty_\phi}(g)}d|x^{\prime}\mu_\infty| \leq ||f||_{\theta, M}.$ This means
\begin{eqnarray}\label{2.5}
H\int_{T}||fg||d|x^{\prime}\mu_\infty| \leq ||f||_{\theta, M}\mu_{\infty_\phi}(g).
\end{eqnarray}
From (\ref{2.4}) and (\ref{2.5}) we get 
\begin{eqnarray*}
H\int_{T}||fg||d|x^{\prime}\mu_\infty| \leq ||f||_{\theta,M}\max \{1, \mu_{\infty_\phi(g)}\}\\
{\textit{~i.e}}~H\int_{T}\frac{||fg||}{||f||_{\theta,M}}d|x^{\prime}\mu_\infty| \leq \max \{1, \mu_{\infty_\phi}(g)\}.
\end{eqnarray*}
Next suppose that $ f \in \mathcal{H}^\theta(M,X)$ and $||f||_{\theta,M}>0$, from here we get two cases.\\
{\bf{Case 1}}: If $f$ is bounded and has support say $E,$ of finite $M-$measurable. Let $g = \frac{f}{||f||_{\theta,M}}$. Since $g$ is bounded on $E,~\phi(\chi(||g||))$ is bounded so, $\sup\limits_{\mu_\infty \in M}H\int_{T}\phi(\chi(||g||))d|x^{\prime}\mu_\infty|$ exists. \textcolor{green}{Now allowing the Young's inequality to an equality, we obtain}
\begin{align*}
&\quad \sup\limits_{\mu_\infty \in M}H\int_{T}\theta(||g||)d|x^{\prime}\mu_\infty|\\
&=\sup\limits_{\mu_\infty \in M}\left\{H\int_{T}\theta(||g||)d|x^{\prime}\mu_\infty|+ H\int_{T}\phi(\chi(||g||))d|x^{\prime}\mu_\infty|-H\int_{T}\phi(\chi(||g||))d|x^{\prime}\mu_\infty|\right\}\\&= \sup\limits_{\mu_\infty \in M}\left\{H\int_{T}||g\chi(||g||)d|x^{\prime}\mu_\infty|-H\int_{T}\phi(\chi(||g||))d|x^{\prime}\mu_\infty|\right\}.
\end{align*}
As $\chi(||g||)$ is positive bounded $M$-measurable function, there exists a non decreasing sequence of positive simple functions $\{g_n\}$ such that $g_n \to \chi(||g||)~M-$a.e. and $g_n \leq \chi(||g||)$ and $||gg_n|| \leq ||g \chi(||g||)||.$ So, \cite[Equation 7 of  page 353]{BD} $$\sup\limits_{\mu_\infty \in M}H\int_{T}||g\chi(||g||)||d|x^{\prime}\mu_\infty|\leq \sup\limits_{\mu_\infty \in M}\lim\limits_{n}H\int_{T}||gg_n||d|x^{\prime}\mu_\infty|.$$ Hence 
\begin{align*}
\sup\limits_{\mu_\infty \in M}H\int_{T}\theta(||g||)d|x^{\prime}\mu_\infty| &\leq \sup\limits_{\mu_\infty \in M}\max\{1-\mu_{\infty_\phi}(\chi||g||, 0)\\&\leq 1.
\end{align*}
{\bf{Case 2}}: Suppose $f \in \mathcal{H}^\theta(M,X)$ and $||f||_{\theta,M}>0.$ Since $M$ is $\sigma-$bounded, we can choose an increasing sequences of sets $\{E_n\}$ of finite $M-$measurable such that $T= \bigcup\limits_{n=1}^{\infty}E_n.$ Using (\ref{first}) for $M(||f_n|| \to M||f||) $ for each $n=1,2,\dots ,$ we have $$\sup\limits_{\mu_\infty \in M}H\int_{T}\theta\left(\frac{||f_n||}{||f_n||_{\theta,M}}\right)d|x^{\prime}\mu_\infty|\leq 1.$$
\end{proof}

\begin{prop}\label{del}
Suppose $\theta \in \Delta_2$ then $\mathcal{H}^\theta(M,X)= H(M,X)$ also if $f_n \in \mathcal{H}^\theta(M,X)$ with $\sup\limits_{\mu_\infty \in M}H\int_{T}\theta(||f_n||)d|x^{\prime}\mu_\infty| \to 0$ then $||f_n||_{\theta, M} \to 0.$
\end{prop}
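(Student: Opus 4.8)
The statement has two parts: first, the set equality $\mathcal{H}^\theta(M,X)=H(M,X)$ under the $\Delta_2$ condition; second, the modular-to-norm convergence implication. The plan is to treat them separately, since the first is a structural claim about the two spaces coinciding and the second is a convergence statement that leverages the $\Delta_2$ growth control.

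For the set equality, I would first observe that the inclusion $H(M,X)\subseteq\mathcal{H}^\theta(M,X)$ is essentially immediate: if $f\in H(M,X)$, then $\theta(\|f\|)\in H(M,\mathbb{R})$ by definition, so the modular $\sup_{\mu_\infty\in M}H\int_T\theta(\|f\|)\,d|x'\mu_\infty|$ is finite, and by scaling $f$ down by a suitable constant one gets $\|f\|_{\theta,M}<\infty$, placing $f$ in $\mathcal{H}^\theta(M,X)$. The reverse inclusion $\mathcal{H}^\theta(M,X)\subseteq H(M,X)$ is where $\Delta_2$ does the real work. Given $f\in\mathcal{H}^\theta(M,X)$, I would invoke \Cref{thm} to get that $\sup_{\mu_\infty\in M}H\int_T\theta(\|f\|/\|f\|_{\theta,M})\,d|x'\mu_\infty|\le 1$, i.e. the modular of $f/\|f\|_{\theta,M}$ is finite. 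The goal is then to remove the normalizing constant and conclude $\theta(\|f\|)$ itself is integrable. Writing $\|f\|_{\theta,M}\le 2^N$ for some integer $N$ and iterating the $\Delta_2$ inequality $\theta(2t)\le k\theta(t)$ exactly $N$ times yields $\theta(\|f\|)\le k^N\,\theta(\|f\|/2^N)\le k^N\,\theta(\|f\|/\|f\|_{\theta,M})$ pointwise (using monotonicity of $\theta$), so $\theta(\|f\|)$ is dominated by an integrable function and hence lies in $H(M,\mathbb{R})$, giving $f\in H(M,X)$.

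For the convergence statement, the strategy is again to exploit the finite iteration of the $\Delta_2$ bound, but now quantitatively. Suppose $\sup_{\mu_\infty\in M}H\int_T\theta(\|f_n\|)\,d|x'\mu_\infty|\to 0$; I want $\|f_n\|_{\theta,M}\to 0$. The plan is to fix $\e>0$ and show the modular of $f_n/\e$ eventually drops below $1$, which by the definition of the Luxemburg-type norm forces $\|f_n\|_{\theta,M}\le\e$. Writing $1/\e\le 2^m$ for a fixed integer $m$, the $\Delta_2$ iteration gives $\theta(\|f_n\|/\e)\le\theta(2^m\|f_n\|)\le k^m\theta(\|f_n\|)$ pointwise, so $\sup_{\mu_\infty\in M}H\int_T\theta(\|f_n\|/\e)\,d|x'\mu_\infty|\le k^m\sup_{\mu_\infty\in M}H\int_T\theta(\|f_n\|)\,d|x'\mu_\infty|\to 0$; once this is below $1$ we conclude $\|f_n\|_{\theta,M}\le\e$ for all large $n$, and since $\e$ was arbitrary the norm tends to $0$.

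The main obstacle I anticipate is not the algebra of iterating $\Delta_2$, but justifying the interchange of the supremum over $M$ with the pointwise domination and the Henstock integral. Because $M(\cdot)$ is a supremum of Henstock-Kurzweil integrals against the family $|x'\mu_\infty|$, I must check that the pointwise inequality $\theta(\|f\|)\le C\,\theta(\|f\|/\|f\|_{\theta,M})$ transfers to each member of the family uniformly, and that the constant $C=k^N$ (respectively $k^m$) does not depend on $\mu_\infty$; fortunately it does not, since $C$ arises purely from $\theta$ and the scalar bound on $\|f\|_{\theta,M}$. A secondary point requiring care is the domination/monotone-convergence step for the Henstock-Kurzweil integral, where one cannot freely appeal to the Lebesgue dominated convergence theorem; here I would lean on the comparison properties recorded earlier (as in the citation to \cite{BD} used in the proof of \Cref{thm}) to legitimize passing the bound through the integral.
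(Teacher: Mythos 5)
Your proof is correct, and for the set equality it follows essentially the paper's own route: one inclusion via Young's inequality, the reverse inclusion via Theorem \ref{thm} together with a dyadic bound on $\|f\|_{\theta,M}$ and the $\Delta_2$ condition. Three differences are worth recording. (i) Your $\Delta_2$ step repairs a genuine flaw in the paper's write-up: the paper asserts $\theta(\|f\|)=\theta(\|f\|/\|f\|_{\theta,M})\,\|f\|_{\theta,M}\le 2^m\,\theta(\|f\|/\|f\|_{\theta,M})$, treating $\theta$ as if it were homogeneous, which is false as stated; your iterated inequality $\theta(\|f\|)\le k^N\theta(\|f\|/2^N)\le k^N\theta(\|f\|/\|f\|_{\theta,M})$, with the constant $k^N$ coming from $N$ applications of $\theta(2t)\le k\theta(t)$ plus monotonicity of $\theta$, is the correct version of what the paper evidently intended. (ii) The paper's proof stops at the equality $\mathcal{H}^\theta(M,X)=H(M,X)$ and gives no argument whatsoever for the second assertion of the proposition; your final step --- fix $\e>0$, choose $m$ with $1/\e\le 2^m$, bound $\theta(\|f_n\|/\e)\le k^m\theta(\|f_n\|)$ pointwise, push the modular of $f_n/\e$ below $1$, and conclude $\|f_n\|_{\theta,M}\le\e$ for large $n$ --- supplies exactly the missing half, and it is sound. (iii) Your cautionary remarks are the right ones: the domination arguments go through because the dominated functions are non-negative and $M$-measurable, so Henstock--Kurzweil integrability reduces to Lebesgue integrability, where comparison is legitimate; and the passage from a modular bound to a norm bound uses the Luxemburg formulation (\ref{hk}), whose equivalence with (\ref{hk1}) the paper asserts immediately after defining the latter --- if one insists on working with (\ref{hk1}) directly, Young's inequality yields the slightly weaker but equally sufficient conclusion $\|f_n\|_{\theta,M}\le 2\e$.
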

\begin{proof}
Let $f: T \to X$ be $M-$measurable function and $g \in \mathcal{S}(\Sigma).$ Then by Young's inequality
\begin{eqnarray*}
H\int_{T}||fg||d|x^{\prime}\mu_\infty|\leq H\int_{T}\theta(||f||d|x^{\prime}\mu_\infty|.
\end{eqnarray*}
So, if $f \in H(M,X)$ then $f \in \mathcal{H}^\theta(M,X).$ Let $f \in \mathcal{H}^\theta(M,X)$ and $||f||_{\theta, M} \neq 0.$ Now from the Theorem \ref{thm}, $\frac{f}{||f||_{\theta,M}} \in H(M,X).$ Now as $f \in \mathcal{H}^\theta(M,X)$ implies $||f||_{\theta, M}<\infty$ that is, $||f||_{\theta,M} \leq 2^m;~m>0.$ Therefore
\begin{align*}
\theta(||f||)&= \theta\left(\frac{||f||}{||f||_{\theta,M}}\right)||f||_{\theta,M}\\&\leq 2^m \theta\left(\frac{||f||}{||f||_{\theta,M}}\right). 
\end{align*}
Thus, $\theta(||f||) \in H(M,X),$ so $ f \in H(M,X)).$ Hence $\mathcal{H}^\theta(M,X) = H(M,X).$
\end{proof}
\begin{thm}\label{simple}
$\mathcal{S}_X(\Sigma)$ is dense in $\mathcal{H}^\theta(M,X).$
\end{thm}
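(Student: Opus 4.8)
The plan is to start from an arbitrary $f \in \mathcal{H}^\theta(M,X)$ and manufacture a sequence of simple functions converging to it in the $\|\cdot\|_{\theta,M}$ norm. Because $f$ is $M$-measurable, the definition itself supplies a sequence $(s_n) \subset \mathcal{S}_X(\Sigma)$ with $s_n \to f$ $M$-a.e.; moreover, by the standard construction for strongly measurable Banach-space valued functions one can arrange $\|s_n(t)\| \le \|f(t)\|$ for every $t$ and $n$, so that $\|f - s_n\| \le 2\|f\|$ pointwise while $\|f - s_n\| \to 0$ $M$-a.e.

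First I would reduce to approximants of finite-measure support. Using the $\sigma$-boundedness, write $T = \bigcup_{i=1}^{\infty} E_i$ with the $E_i$ pairwise disjoint and $M(E_i) < \infty$, and replace each $s_n$ by its truncation to $\bigcup_{i \le N_n} E_i$. The discarded tail can be made modularly small by the integrability of $\theta(\|f\|/k)$ for a fixed admissible $k$ (which exists since $f \in \mathcal{H}^\theta(M,X)$), exactly as in the completeness argument for $(\mathcal{H}^\theta(M,X),\|\cdot\|_{\theta,M})$ above. This keeps each approximant in $\mathcal{S}_X(\Sigma)$ while ensuring its support has finite $M$-measure.

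Next I would pass from pointwise to modular convergence. Since $\theta$ is nondecreasing and $\|f - s_n\| \le 2\|f\|$, we have $\theta(\|f - s_n\|/k) \le \theta(2\|f\|/k)$, and the right-hand side is $|x^{\prime}\mu_\infty|$-integrable once $\theta \in \Delta_2$ (see Proposition \ref{del}); combined with $\theta(\|f - s_n\|/k) \to 0$ $M$-a.e., this lets me invoke the dominated convergence theorem for the Henstock-Kurzweil integral with respect to a $\sigma$-bounded family \cite{BD} to conclude
\[
\sup_{\mu_\infty \in M} H\int_T \theta\!\left(\frac{\|f - s_n\|}{k}\right) d|x^{\prime}\mu_\infty| \longrightarrow 0.
\]
Proposition \ref{del} then converts this modular convergence into $\|f - s_n\|_{\theta,M} \to 0$, giving the claimed density of $\mathcal{S}_X(\Sigma)$ in $\mathcal{H}^\theta(M,X)$.

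The hard part will be the third step: justifying the dominated convergence \emph{uniformly} over the whole family $M$ and over the unit ball of $X^{\prime}$ concealed inside $|x^{\prime}\mu_\infty|$. Almost-everywhere convergence with a single pointwise dominant controls each individual integral, but the supremum over $\mu_\infty \in M$ can destroy the limit unless the tail $\bigcup_{i>N} E_i$ is controlled uniformly as well; this is precisely where the finite-measure truncation of the second step and the $\Delta_2$ growth bound must be combined, and it is the only place the argument is genuinely more delicate than in the scalar Orlicz setting.
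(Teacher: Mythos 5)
Your plan takes a genuinely different route from the paper: the paper proves density by a two--step triangle inequality, first approximating $f\in\mathcal{H}^\theta(M,X)$ by some $g\in H(M,X)$ in $\|\cdot\|_{\mathcal{H}^\theta}$, then approximating $g$ by a $\Sigma$-simple function $s$ in $\|\cdot\|_{H(M,X)}$, and finally combining the two via a norm comparison $\|\cdot\|_{\mathcal{H}^\theta(M,X)}\leq N\|\cdot\|_{H(M,X)}$. You instead argue directly: a.e.\ approximation supplied by $M$-measurability, truncation to finite $M$-measure, dominated convergence for the modular, and Proposition \ref{del} to pass from modular to norm convergence. Structurally this is cleaner, but as written it has two genuine gaps.

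First, the scale problem. Norm convergence $\|f-s_n\|_{\theta,M}\to 0$ requires the modular $\sup_{\mu_\infty\in M} H\int_T\theta\left(\|f-s_n\|/\epsilon\right)d|x^{\prime}\mu_\infty|$ to become $\leq 1$ for \emph{every} $\epsilon>0$, not merely for one admissible scale. Membership $f\in\mathcal{H}^\theta(M,X)$ gives integrability of $\theta(\|f\|/k_0)$ for \emph{some} $k_0$ only, so your dominant $\theta(2\|f-s_n\|/\epsilon)\leq\theta(2\|f\|/\epsilon)$ is integrable only for $\epsilon\geq 2k_0$ unless $\theta\in\Delta_2$ --- a hypothesis you invoke (through Proposition \ref{del}) but which the theorem does not assume. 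Working only at the scale $\epsilon=2k_0$, dominated convergence yields $\limsup_n\|f-s_n\|_{\theta,M}\leq 2k_0$, not convergence to zero. In fairness, the statement without $\Delta_2$ is doubtful --- classically the norm closure of the simple functions is the Orlicz heart, a proper subspace when $\Delta_2$ fails --- and the paper's own proof buries the same difficulty in its unjustified first step (the density of $H(M,X)$ in $\mathcal{H}^\theta(M,X)$). Second, and more serious for your own plan: the uniformity of the limit over $\mu_\infty\in M$ (and over the unit ball of $X^{\prime}$ hidden in $|x^{\prime}\mu_\infty|$) is nowhere established. Dominated convergence controls each individual integral, but a supremum over an infinite family of quantities, each tending to zero, need not tend to zero. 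You correctly identify this as ``the hard part'' and say the truncation and the $\Delta_2$ bound ``must be combined,'' but you never combine them; one would need something like uniform countable additivity or uniform absolute continuity of the family $M$ (the paper only assumes $M$ relatively weakly compact in $Ca(\Sigma)$ later, for the integral representations, not here), or an Egorov-type argument giving uniform convergence off sets whose measure is small uniformly in $\mu_\infty$. As it stands, that step is an assertion, not a proof, so the proposal is an outline of a strategy rather than a complete argument.
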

\begin{proof}
Choose arbitrarily $\epsilon>0$ and $f \in \mathcal{H}^\theta(M,X)$ that is, $ g \in H(M,X)$ with 
\begin{eqnarray*}
||g-f||_{\mathcal{H}^\theta(M,X)} \leq \frac{\epsilon}{N+1}.
\end{eqnarray*} 
Moreover, in correspondence with $\epsilon$ and $g,$ we find a $\Sigma-$simple function $s$, with
\begin{eqnarray*}
||s-g||_{H(M,X)} \leq \frac{\epsilon}{N+1}
\end{eqnarray*}
As $||.||_{\mathcal{H}^\theta(M,X)} \leq M||.||_{H(M,X)}$ and hence, we obtain
\begin{align*}
||s-f||_{\mathcal{H}^\theta(M,X)} &\leq ||s-f||_{\mathcal{H}^\theta(M,X)} + ||g-f||_{\mathcal{H}^\theta(M,X)}\\
&\leq N(||s-g||_{H(M,X)})+||g-f||_{\mathcal{H}^\theta(M,X)}\\&\leq \frac{N \epsilon}{N+1} +\frac{\epsilon}{N+1}\\&= \epsilon.
\end{align*}
So, $\mathcal{S}_X(\Sigma)$ is dense in $\mathcal{H}^\theta(M,X).$
\end{proof}
\begin{prop}\label{deb}
Suppose $\mu_\infty: \Sigma \to L(X,Y)$ is countable additive with $||\mu_{\infty_{X,Y}}||(T)< \infty,$ then $\mathcal{H}^\theta(M,X) \subset H(\mu_{\infty_{X,Y}}, X)$ and $ \mu_{\infty_{X,Y}}(f) \leq ||\mu_{\infty_{\theta,M}}||(T)||f||_{\theta, M}$ for $ f \in \mathcal{H}^\theta(M,X).$
\end{prop}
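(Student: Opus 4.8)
The plan is to prove both assertions first for $\Sigma$-simple functions, where the estimate is immediate from the definition of the $(\theta,M)$ semi-variation, and then to propagate the conclusion to all of $\mathcal{H}^\theta(M,X)$ using the density of $\mathcal{S}_X(\Sigma)$ from Theorem~\ref{simple} together with the completeness of $Y$.

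First I would fix a simple function $f=\sum_{i=1}^{n}x_i\,ch(E_i)\in\mathcal{S}_X(\Sigma)$ with pairwise disjoint $E_i$, whose integral against the operator measure is the finite sum $\sum_{i=1}^{n}\mu_\infty(E_i)x_i\in Y$, so that $\mu_{\infty_{X,Y}}(f)=\bigl\|\sum_{i=1}^{n}\mu_\infty(E_i)x_i\bigr\|$. Assuming $\|f\|_{\theta,M}>0$, I would normalize and set $g=f/\|f\|_{\theta,M}=\sum_{i=1}^{n}(x_i/\|f\|_{\theta,M})\,ch(E_i)$. Theorem~\ref{thm} applied to $f$ gives precisely $M_\theta(g)\le 1$, so $g$ is an admissible competitor in the supremum defining $\|\mu_\infty\|_{\theta,M}(T)$. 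Consequently $\bigl\|\sum_{i=1}^{n}\mu_\infty(E_i)(x_i/\|f\|_{\theta,M})\bigr\|\le\|\mu_\infty\|_{\theta,M}(T)$, and multiplying through by $\|f\|_{\theta,M}$ yields
\[
\mu_{\infty_{X,Y}}(f)\le\|\mu_\infty\|_{\theta,M}(T)\,\|f\|_{\theta,M}
\]
for every $f\in\mathcal{S}_X(\Sigma)$ (the case $\|f\|_{\theta,M}=0$ being trivial).

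Next I would pass to an arbitrary $f\in\mathcal{H}^\theta(M,X)$. By Theorem~\ref{simple} there are simple functions $s_n\to f$ in $\|\cdot\|_{\theta,M}$. Applying the simple-function estimate to the differences shows that the vectors $I(s_n):=\sum_i\mu_\infty(E_i^{(n)})x_i^{(n)}$ satisfy $\|I(s_n)-I(s_m)\|\le\|\mu_\infty\|_{\theta,M}(T)\,\|s_n-s_m\|_{\theta,M}\to 0$, so $(I(s_n))$ is Cauchy in the Banach space $Y$ and converges. I would take this limit as the value of $\int_T f\,d\mu_\infty$, which exhibits $f\in H(\mu_{\infty_{X,Y}},X)$; letting $n\to\infty$ in the displayed inequality then delivers the stated bound.

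The main obstacle is twofold. First, one must verify that the limit defining $\int_T f\,d\mu_\infty$ does not depend on the approximating sequence and indeed meets the definition of $H(\mu_{\infty_{X,Y}},X)$-integrability; this is handled by interlacing two approximating sequences and invoking the simple-function estimate once more, but it requires care. Second, and more substantive, the hypothesis only guarantees finiteness of the ordinary semi-variation $\|\mu_{\infty_{X,Y}}\|(T)$, while the Cauchy argument above uses the $(\theta,M)$ semi-variation $\|\mu_\infty\|_{\theta,M}(T)$; closing the argument therefore hinges on a comparison between these two semi-variations on $T$, ensuring that the assumed finiteness controls the quantity that actually appears in the bound.
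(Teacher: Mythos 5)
Your proposal has a genuine gap, and it begins with the interpretation of the quantity to be bounded. In this paper $\mu_{\infty_{X,Y}}(f)$ is not the norm of the vector integral $\bigl\|H\int_T f\,d\mu_\infty\bigr\|$: it is the mean ($L^1$-type) seminorm $\sup_{\|y_1\|\le 1} H\int_T \|f\|\,d|y_1\mu_\infty|$, as the paper makes explicit when it invokes Proposition \ref{deb} in the continuity theorem of Section 3. For a simple function $f=\sum_i x_i\,ch(E_i)$ your identification gives $\bigl\|\sum_i \mu_\infty(E_i)x_i\bigr\|$, which because of cancellation can be far smaller than $\sup_{\|y_1\|\le 1}\sum_i\|x_i\|\,|y_1\mu_\infty|(E_i)$; so your simple-function estimate controls a strictly weaker quantity, and the stated inequality does not follow from it. The same misreading undermines the membership claim: $H(\mu_{\infty_{X,Y}},X)$ is an $L^1$-type space, and to place $f$ in it one must show that the approximating simple functions are Cauchy in the mean seminorm, $\mu_{\infty_{X,Y}}(s_n-s_m)\to 0$, not merely that the vectors $I(s_n)$ converge in $Y$; the limit of $I(s_n)$ produces a candidate value for the integral, but it does not certify integrability of $f$.

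The paper handles the cancellation problem by dualizing. It fixes $y_1\in Y'$ with $\|y_1\|\le 1$, refines each $E_i$ into finitely many disjoint sets $\mathcal{F}_{ij}$ that nearly attain the variation $|\mu_{\infty_{y_1}}|(E_i)$, and then picks unit vectors $x_{ij}$ that nearly attain $\|\mu_{\infty_{y_1}}(\mathcal{F}_{ij})\|$. This rewrites $H\int_T\|f\|\,d|\mu_{\infty_{y_1}}|$, up to $\epsilon$, as a sum of terms $|y_1(\mu_\infty(\mathcal{F}_{ij})x_{ij})|\,\|x_i\|$, which is then dominated by $|\mu_{\infty_{y_1}}|_{\theta,M}(T)\,\|f\|_{\theta,M}$ because the refined simple function built from the $x_{ij}$'s, normalized by $\|f\|_{\theta,M}$, is admissible in the $(\theta,M)$-variation; this is where Theorem \ref{thm} enters, i.e.\ the same normalization you use, but applied to the refined function rather than to $f$ itself. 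With this estimate in hand, the approximating sequence from Theorem \ref{simple} is Cauchy in the mean seminorm, which yields both $f\in H(\mu_{\infty_{X,Y}},X)$ and the bound in the limit. One point you raise is fair: the hypothesis asserts finiteness of the ordinary semi-variation while the bound involves the $(\theta,M)$ semi-variation, and the paper itself never reconciles the two; but that observation does not repair the two gaps above.
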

\begin{proof}
Let $f \in \mathcal{H}^\theta(M,X).$ Then by the Theorem \ref{simple}, there is a sequence $\{f_n\}$ of simple functions converging to $f$ in $\mathcal{H}^\theta(M,X).$ Let $y_1 \in Y^\prime $ be such that $|y_1| \leq 1.$ If $ f= \sum\limits_{i=1}^{n}x_i ch(E_i) \in \mathcal{S}_X(E),$ then $$H\int_{T}||f||d|\mu_{\infty_{i}}= \sum_{i=1}^{n}||x_i|||\mu_{\infty_{y_1}}|(E_i).$$ Suppose $\epsilon>0$ be arbitrary and for each $i,$ let $\{\mathcal{F}_{ij}\}$ be a finite family of disjoint sets in $\Sigma$ contained in $E_i$ such that 
\begin{eqnarray*}
|\mu_{\infty_{y_i}}|(E_i) < \sum\limits_{ij}||\mu_{\infty_{y_1}}(\mathcal{F}_{ij})||+ \frac{\epsilon}{n||x_i||}.
\end{eqnarray*}
Then 
\begin{eqnarray*}
H\int_{T}||f||d|\mu_{\infty_{y_1}}|<\sum\limits_{ij}||\mu_{\infty_{y_1}}(\mathcal{F}_{ij})||||x_i||+ \epsilon.
\end{eqnarray*}
For each $\mathcal{F}_{ij},$ we can choose an $x_{ij}$ with $||x_{ij}||=1 $ such that 
\begin{eqnarray*}
||\mu_{\infty_{y_1}}(\mathcal{F}_{ij})||<|\mu_{\infty_{y_1}}(\mathcal{F}_{ij}x_{ij})| + \frac{\epsilon}{k},
\end{eqnarray*}
where $k$ is a constant, by the Theorem \ref{thm}
\begin{align*}
H\int_{T}||f||d|\mu_{\infty_{y_1}}| &\leq \sum\limits_{ij}|\mu_{\infty_{y_1}}(\mathcal{F}_{ij})x_{ij}||x_{ij}|||+ \epsilon\\&= \sum\limits_{i,j}\left\vert\mu_{\infty_{y_1}}(\mathcal{F}_{ij})x_{ij} \frac{||x_i||}{||f||_{\theta,M}}\right\vert||f||_{\theta,M} + \epsilon \\&\leq |\mu_{\infty_{y_1}}|_{\theta,M}(T)||f||_{\theta,M}+ \epsilon.
\end{align*}
So,
\begin{eqnarray*}
H\int_{T}||f||d|\mu_{\infty_{y_1}}| \leq |\mu_{\infty_{y_1}}|_{\theta,M}(T)||f||_{\theta,M}.
\end{eqnarray*}
Therefore
\begin{eqnarray*}
\mu_{\infty_{X,Y}}(f) \leq ||\mu_{\infty}||_{\theta,M}(T)||f||_{\theta,M}.
\end{eqnarray*}
From above we have 
\begin{eqnarray*}
\mu_{\infty_{X,Y}}(f_n -f_m) \leq ||\mu_{\infty}||_{\theta,M}(T)||f_n -f_m||_{\theta,M}.
\end{eqnarray*}
This means $\{f_n\}$ is Cauchy sequence in $H(\mu_{\infty_{X,Y}},X).$ Thus $f \in H(\mu_{\infty_{X,Y}},X)$ as $f_n \to f$ is $\mu_{\infty_{X,Y}}$-a.e. and $\mu_{\infty_{X,Y}}(f_n -f) \to 0 $ as $ n \to \infty.$ So, $ \mu_{\infty_{X,Y}}(f) \leq ||\mu_{\infty_{\theta,M}}||(T)||f||_{\theta, M}$ for $ f \in \mathcal{H}^\theta(M,X)$ as this is true for each $n.$
\end{proof}
 We will now proof $(\mathcal{H}^\theta , ||.||_{\mathcal{H}^\theta})$ is a solid Banach lattice with weak order unit. Recalling a partially orderd Banach space $Z,$ which is also a vector lattice, is a Banach lattice if $||x|| \leq ||y|| $ for every $x,y \in Z$ with $|x| \leq |y|.$ A weak order unit of $Z$ is a positive element $ e \in Z$ such that if $x \in Z$ and $x\Lambda e=0,$ then $x=0.$ Let $Z$ be a Banach lattice and $ A \neq \emptyset \subset B \subset Z.$ We say that $A$ is solid in $B$ if for each $x,y $ with $x \in B,~y \in A$ and $|x| \leq |y|,$ it is $ x \in A.$
\begin{thm}
$(\mathcal{H}^\theta , ||.||_{\mathcal{H}^\theta})$ is a solid Banach lattice with weak order unit.
\end{thm}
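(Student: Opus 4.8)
The plan is to verify in turn the vector-lattice structure of $\mcH^\theta(|x^{\prime}\mu_\infty|)$, the lattice-norm property of $\|\cdot\|_{\mcH^\theta}$, and solidity — all of which reduce to one elementary feature of $\theta$ — and then to exhibit a weak order unit; completeness is the Banach-space assertion accompanying (\ref{hk}). Throughout I would use the pointwise order: for real-valued $f,g\in\mcH^\theta(|x^{\prime}\mu_\infty|)$, declare $f\le g$ when $f(t)\le g(t)$ holds $|x^{\prime}\mu_\infty|$-a.e.

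The single engine is the monotonicity of $\theta$. Since $\theta(u)=\int_0^u\chi(t)\,dt$ with $\chi$ non-decreasing, $\theta$ is non-decreasing and $\theta(0)=0$; hence $|f|\le|g|$ pointwise forces $\theta(|f|/k)\le\theta(|g|/k)$ for every $k>0$. Consequently, if $H\int_T\theta(|g|/k)\,d|x^{\prime}\mu_\infty|\le 1$ then the same inequality holds with $|f|$ in place of $|g|$, so every $k$ admissible for $g$ in (\ref{hk}) is admissible for $f$. This yields at once the lattice-norm inequality $\|f\|_{\mcH^\theta}\le\|g\|_{\mcH^\theta}$ whenever $|f|\le|g|$, and, taking $g\in\mcH^\theta$ together with an $|x^{\prime}\mu_\infty|$-measurable $f$ satisfying $|f|\le|g|$, the solidity of $\mcH^\theta$. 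Finally, since $\mcH^\theta$ is a vector space, $|f|+|g|\in\mcH^\theta$ whenever $f,g\in\mcH^\theta$; as $|f\vee g|\le|f|+|g|$ and $|f\wedge g|\le|f|+|g|$, solidity then places $f\vee g$, $f\wedge g$, $f^+$, $f^-$ and $|f|$ inside $\mcH^\theta$, so $\mcH^\theta$ is a Riesz space and, with (\ref{hk}), a Banach lattice.

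For the weak order unit I would exploit the $\sigma$-boundedness underlying the construction: write $T=\bigcup_{i=1}^\infty E_i$ with $E_i\in\Sigma$ pairwise disjoint and $|x^{\prime}\mu_\infty|(E_i)<\infty$. Because $\theta(c)\to 0$ as $c\downarrow 0$, I can choose constants $c_i>0$ small enough that $\sum_{i=1}^\infty\theta(c_i)\,|x^{\prime}\mu_\infty|(E_i)\le 1$, and set $e=\sum_{i=1}^\infty c_i\,ch_{E_i}$. Then $H\int_T\theta(e)\,d|x^{\prime}\mu_\infty|=\sum_{i=1}^\infty\theta(c_i)\,|x^{\prime}\mu_\infty|(E_i)\le 1$, so $e\in\mcH^\theta$ with $\|e\|_{\mcH^\theta}\le 1$, while $e(t)>0$ for a.e.\ $t$. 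If $x\in\mcH^\theta$ with $x\ge 0$ and $x\wedge e=0$, then $\min\{x(t),e(t)\}=0$ a.e.; since $e(t)>0$ a.e.\ this forces $x(t)=0$ a.e., i.e.\ $x=0$. Hence $e$ is a weak order unit, and $(\mcH^\theta,\|\cdot\|_{\mcH^\theta})$ is a solid Banach lattice with weak order unit.

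The lattice and monotonicity verifications are routine once the order is fixed. The one point demanding genuine care is the construction of the weak order unit, where the $c_i$ must be chosen so that $e$ is simultaneously strictly positive a.e.\ (so the meet condition $x\wedge e=0$ is decisive) and of finite Luxemburg norm. The $\sigma$-bounded decomposition supplies finite measures on which to hang the decaying weights, and the summability $\sum_i\theta(c_i)\,|x^{\prime}\mu_\infty|(E_i)\le 1$ is precisely what keeps $e$ inside $\mcH^\theta$.
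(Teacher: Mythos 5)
Your proposal is correct, and it is both more explicit and slightly different in emphasis from the paper's own argument. The core of both proofs is the same observation: monotonicity of $\theta$ makes the Luxemburg norm monotone, so $|f|\le|g|$ a.e.\ implies every admissible $k$ for $g$ is admissible for $f$, whence $||f||_{\mathcal{H}^\theta}\le||g||_{\mathcal{H}^\theta}$ and solidity. Where the paper differs is in how it handles the meaning of ``a.e.'' in the vector-measure setting: it invokes Rybakov's theorem to produce a functional $x_1'$ with $||x_1'||\le 1$ such that $\lambda=x_1'\mu_\infty$ is a control measure for $\mu_\infty$, and then phrases all order comparisons $\lambda$-a.e.; this is the device that makes the order (and hence the lattice structure) well defined simultaneously for all the scalar measures $|x'\mu_\infty|$. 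You instead fix the order relative to a single scalar measure $|x'\mu_\infty|$, which is fine if $\mathcal{H}^\theta(|x'\mu_\infty|)$ is read as a space over that one measure, but if the space is meant to be attached to the vector measure $\mu_\infty$ itself you would need exactly the paper's Rybakov step to get a common null ideal before declaring $f\le g$. On the other hand, your treatment of the weak order unit is a genuine improvement: the paper disposes of it with the (misphrased) sentence that the space ``is easily a weak order unit,'' whereas you construct one concretely, $e=\sum_i c_i\,ch_{E_i}$ with $\sum_i\theta(c_i)\,|x'\mu_\infty|(E_i)\le 1$, and verify both $e\in\mathcal{H}^\theta$ and that $x\wedge e=0$ forces $x=0$. (Note that since $\mu_\infty$ is $\sigma$-additive on a $\sigma$-algebra, each $|x'\mu_\infty|$ is in fact a finite measure, so $e=c\,ch_T$ for small $c>0$ would already do; your decomposition argument covers the more general $\sigma$-bounded family as well.) Your explicit derivation of the Riesz-space structure from solidity, via $|f\vee g|\le|f|+|g|$, is also a detail the paper takes for granted.
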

\begin{proof}
It is clear that $(\mathcal{H}^\theta , ||.||_{\mathcal{H}^\theta})$ is a solid normed lattice with respect to the usual order. \\ By the Rybakov theorem (see also \cite{JJ} Theorem IX.2.2) there is $x_{1}^{\prime} \in X^\prime$ with $||x_{1}^{\prime}|| \leq 1 $ such that $\lambda= x_{1}^{\prime}\mu_\infty$ is a control measure of $\mu_\infty.$ If $f,g \in \mathcal{H}^\theta(|x^\prime \mu_\infty|),~|f| \leq |g|~\lambda$-a.e, $k \in \mathbb{N}$ and $ x^\prime \in X^\prime$ with $||x^\prime|| \leq 1$ then,
\begin{eqnarray*}
\inf\left\{k>0:~\theta\left(\frac{|f|}{k}\right)d|x^\prime \mu_\infty| \leq 1 \right\} \leq \inf\left\{k>0:~\theta\left(\frac{|g|}{k}\right)d|x^\prime \mu_\infty|\leq 1 \right\}.
\end{eqnarray*}
That is, $||f||_{\mathcal{H}^\theta} \leq ||g||_{\mathcal{H}^\theta}.$ So, $(\mathcal{H}^\theta(|x^\prime \mu_\infty|),||.||_{\mathcal{H}^\theta})$ is Banach lattice.
  Moreover, $\mathcal{H}^\theta(|x^\prime \mu_\infty|)$ is easily a weak order unit. 
\end{proof}
\begin{thm}
$(\mathcal{H}^\theta , ||.||_{\mathcal{H}^\theta})$ is a K\"othe function spaces.
\end{thm}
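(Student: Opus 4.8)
The plan is to verify, over the measure space $(T,\Sigma,\lambda)$ furnished by the Rybakov control measure $\lambda=x_1^\prime\mu_\infty$ introduced in the previous theorem, the three defining properties of a K\"othe (Banach) function space of $\lambda$-measurable functions: completeness together with the ideal (solidity) property, saturation by a strictly positive weak order unit, and the Fatou property. The first two can be read off from results already in hand, so the substantive step is the Fatou property.

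First I would record that $(\mathcal{H}^\theta,\|\cdot\|_{\mathcal{H}^\theta})$ is a Banach space and, by the preceding theorem, a solid Banach lattice: whenever $f$ is $\lambda$-measurable, $g\in\mathcal{H}^\theta$ and $|f|\le|g|$ $\lambda$-a.e., one has $f\in\mathcal{H}^\theta$ and $\|f\|_{\mathcal{H}^\theta}\le\|g\|_{\mathcal{H}^\theta}$; this is exactly the ideal property demanded of a K\"othe space. For saturation, the density of $\mathcal{S}_X(\Sigma)$ established in Theorem \ref{simple} shows that every simple function, in particular $ch_A$ for each $A\in\Sigma$ of finite $M$-measure, belongs to $\mathcal{H}^\theta$; since $M$ is $\sigma$-bounded we may write $T=\bigcup_i E_i$ with $\lambda(E_i)<\infty$, so every set of positive measure carries a nonzero element of the space. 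Equivalently, the weak order unit produced in the previous theorem is strictly positive $\lambda$-a.e.

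It then remains to prove the Fatou property. I would take $0\le f_n\uparrow f$ $\lambda$-a.e. with $f_n\in\mathcal{H}^\theta$ and $C:=\sup_n\|f_n\|_{\mathcal{H}^\theta}<\infty$, and fix any $k>C$. For each $x^\prime$ with $\|x^\prime\|\le1$ the integrands $\theta(f_n/k)$ are nonnegative and increase to $\theta(f/k)$, by the monotonicity and continuity of the Young function $\theta=\int_0^{\,\cdot}\chi$. Since their Henstock–Kurzweil integrals are bounded by $1$, the monotone convergence theorem for the Henstock–Kurzweil integral yields $H\int_T\theta(f/k)\,d|x^\prime\mu_\infty|=\lim_n H\int_T\theta(f_n/k)\,d|x^\prime\mu_\infty|\le1$. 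Taking the supremum over $\mu_\infty\in M$ gives $f\in\mathcal{H}^\theta$ with $\|f\|_{\mathcal{H}^\theta}\le k$, and letting $k\downarrow C$ we obtain $\|f\|_{\mathcal{H}^\theta}\le\sup_n\|f_n\|_{\mathcal{H}^\theta}$. The reverse inequality is immediate from solidity, since $f_n\le f$ forces $\|f_n\|_{\mathcal{H}^\theta}\le\|f\|_{\mathcal{H}^\theta}$ for every $n$; hence $\|f_n\|_{\mathcal{H}^\theta}\uparrow\|f\|_{\mathcal{H}^\theta}$.

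The main obstacle is precisely this Fatou step, which rests on a monotone convergence theorem for the Henstock–Kurzweil integral rather than for the Lebesgue integral; the non-absolute character of the HK-integral makes the interchange of limit and integral delicate in general. Here it is controlled because the sequence $\theta(f_n/k)$ is nonnegative, increasing, and has uniformly bounded integrals, so its limit is HK-integrable and monotone convergence applies. The additional interchange of the supremum over $x^\prime$ (equivalently over $\mu_\infty\in M$) with the monotone limit is harmless, both operations being monotone. Assembling the ideal property, saturation, and the Fatou property then identifies $(\mathcal{H}^\theta,\|\cdot\|_{\mathcal{H}^\theta})$ as a K\"othe function space.
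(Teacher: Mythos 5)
The paper states this theorem as a bare assertion immediately after the Banach-lattice theorem and gives \emph{no proof at all}, so there is nothing of the authors' to compare yours against; your proposal supplies exactly what the paper omits. Your verification is correct in substance and follows the natural route: fix the complete finite measure space $(T,\Sigma,\lambda)$ given by the Rybakov control measure $\lambda=|x_1^{\prime}\mu_\infty|$ (this also resolves the ambiguity in \eqref{hk}, where the functional depends on an unspecified $x^{\prime}$), take the ideal property from the preceding theorem, and then prove saturation and Fatou. Two remarks. First, for saturation you do not need the density Theorem \ref{simple}: for $A\in\Sigma$ with $\lambda(A)<\infty$ one has directly $H\int_T\theta\left(\tfrac{ch_A}{k}\right)d\lambda=\theta\left(\tfrac{1}{k}\right)\lambda(A)\leq 1$ for $k$ large, so $ch_A\in\mathcal{H}^\theta$; and local integrability of members (required in the Lindenstrauss--Tzafriri definition) follows from Young's inequality, or equivalently from finiteness of the dual-type norm \eqref{hk1} tested against $g=c\,ch_A$. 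Second, your Fatou step is sound, and the delicacy you flag about monotone convergence for the Henstock--Kurzweil integral can be discharged within the paper's own conventions: the integrands $\theta(f_n/k)$ are nonnegative, and the paper identifies $H(|x^{\prime}\mu_\infty|)=L^1(|x^{\prime}\mu_\infty|)$, so for nonnegative functions HK-integrability coincides with Lebesgue integrability and the classical monotone convergence theorem applies; the subsequent supremum over $x^{\prime}$ (or $\mu_\infty\in M$) only has to preserve the upper bound $1$, which limits do. One definitional caveat: the Fatou property belongs to the Luxemburg--Zaanen/Bennett--Sharpley notion of Banach function space but not to the Lindenstrauss--Tzafriri notion of K\"othe function space, which asks only for the ideal property and $ch_A\in X$ on finite-measure sets; since you prove the stronger package, your argument covers either reading of the (unstated) definition in the paper.
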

\section{Integral representations:}
We discuss the integral representations of operators from $\mathcal{H}^\theta(M,X)$ into $Y.$ We assume that $M$ is relatively weakly compact in $Ca(\Sigma),$ then we can find a control measure $\lambda: \Sigma \to \R^+$ for $M$ such that $\lambda \leq M$ and $M<<\lambda.$
\begin{prop}
Let $f: T \to C$ be a $\lambda-$measurable function such that $fg \in H(\lambda)$ for each $g \in \mathcal{H}^\theta(M),$ then $\sup\{H\int_{T}|fg|d\lambda:~g \in \mathcal{H}^\theta(M),~||g||_{\theta,M} \leq 1 \}< \infty.$
\end{prop}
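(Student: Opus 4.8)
The statement asserts precisely that the multiplication operator $U\colon \mathcal{H}^\theta(M)\to H(\lambda)$ defined by $U(g)=fg$ is bounded: the quantity $H\int_T|fg|\,d\lambda$ is the norm of $U(g)$ in $H(\lambda)=L^1(\lambda)$, and the displayed supremum is therefore the operator norm $\|U\|$. Since by hypothesis $U$ is everywhere defined, and it is obviously linear, the plan is to invoke the closed graph theorem. Both $\mathcal{H}^\theta(M)$, which is complete, and $L^1(\lambda)$ are Banach spaces, so it suffices to prove that the graph of $U$ is closed; the finiteness of $\|U\|$ then follows immediately.

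To verify closedness I would take a sequence $g_n\to g$ in $\mathcal{H}^\theta(M)$ with $fg_n\to h$ in $L^1(\lambda)$ and show $h=fg$ $\lambda$-a.e. First, by Theorem~\ref{first} norm convergence in $\mathcal{H}^\theta(M)$ produces a subsequence $(g_{n_k})$ with $g_{n_k}\to g$ $M$-a.e.; because $\lambda$ is a control measure for $M$ with $\lambda\le M$ and $M\ll\lambda$, the $M$-null and $\lambda$-null sets coincide, so $g_{n_k}\to g$ also $\lambda$-a.e., whence $fg_{n_k}\to fg$ $\lambda$-a.e. Second, the $L^1(\lambda)$-convergence of $(fg_n)$ to $h$ yields a further subsequence converging to $h$ $\lambda$-a.e. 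Comparing the two pointwise limits forces $h=fg$ $\lambda$-a.e., which closes the graph.

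A preliminary point to dispatch is that each term $H\int_T|fg|\,d\lambda$ is finite, i.e.\ $|fg|\in H(\lambda)$, so that $U$ genuinely maps into $L^1(\lambda)$. Here I would exploit that $(\mathcal{H}^\theta,\|\cdot\|_{\mathcal{H}^\theta})$ is a solid Banach lattice: setting $h=\sgn(\overline f)\,|g|$ one has $|h|\le|g|$, so $h\in\mathcal{H}^\theta(M)$ by solidity, while $fh=|f|\,|g|=|fg|\ge 0$. Since $fh\in H(\lambda)$ by hypothesis and the integrand is nonnegative, $|fg|$ is Henstock—hence Lebesgue—integrable, with $H\int_T|fg|\,d\lambda=H\int_T fh\,d\lambda<\infty$. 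With this well-definedness in place, the closed graph theorem delivers $\|U\|<\infty$, which is the assertion.

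The main obstacle is twofold. First, one must pin down the codomain: the non-absolute Henstock integral does not by itself furnish a Banach space, so the argument hinges on the reduction $H\int_T|fg|\,d\lambda=\int_T|fg|\,d\lambda$ to the genuine Lebesgue norm on the complete space $L^1(\lambda)$, and the solidity of the lattice is exactly what makes this reduction legitimate. Second, the delicate step in the graph computation is transferring almost-everywhere information across the two different norm topologies, which is precisely where the coincidence of $M$-null and $\lambda$-null sets and the subsequence extraction of Theorem~\ref{first} are needed. If one prefers to avoid the closed graph theorem, a self-contained alternative is a gliding-hump argument: assuming the supremum infinite, choose $g_n$ with $\|g_n\|_{\theta,M}\le 1$ and $H\int_T|fg_n|\,d\lambda\ge 4^n$, and form $g=\sum_n 2^{-n}|g_n|\in\mathcal{H}^\theta(M)$ to contradict $fg\in H(\lambda)$.
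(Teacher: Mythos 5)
Your proof is correct, but its primary route is genuinely different from the paper's. The paper argues by contradiction with exactly the gliding-hump construction you relegate to your final sentence: assuming the supremum infinite, it picks $g_n\in\mathcal{H}^\theta(M)$ with $\|g_n\|_{\theta,M}\le 1$ and $H\int_T|fg_n|\,d\lambda\ge n\cdot 2^n$, forms $g_0=\sum_n 2^{-n}|g_n|$, notes $\|g_0\|_{\theta,M}\le\sum_n 2^{-n}=1$ so that $fg_0\in H(\lambda)$ by hypothesis, and then derives $H\int_T|fg_0|\,d\lambda\ge 2^{-n}H\int_T|fg_n|\,d\lambda\ge n$ for every $n$, contradicting integrability of $fg_0$ (your $4^n$ variant is the same argument with different constants). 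Your closed-graph argument buys a cleaner conceptual statement --- the supremum is literally the operator norm of the multiplication map $U$ --- at the price of heavier machinery (Baire category via the closed graph theorem) and of the two transfer facts you correctly identify as the delicate points: extraction of $M$-a.e.\ convergent subsequences via Theorem \ref{first}, and the coincidence of $M$-null and $\lambda$-null sets coming from $\lambda\le M$ and $M\ll\lambda$. The paper's argument is more elementary and self-contained, needing only completeness of $\mathcal{H}^\theta(M)$ to sum the series. One point where your write-up is actually more careful than the paper: you justify, via solidity ($h=\sgn(\overline f)\,|g|$, $|h|\le|g|$), that $|fg|$ is genuinely Lebesgue integrable, so that $U$ maps into the Banach space $L^1(\lambda)$. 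The paper silently uses the same fact --- it treats $fg_0\in H(\lambda)$ as implying $H\int_T|fg_0|\,d\lambda<\infty$ --- relying on its standing identification $H(\lambda)=L^1(\lambda)$ rather than on an argument; your step fills that gap, and it is equally needed in the gliding-hump version.
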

\begin{proof}
If possible, let $\sup\{H\int_{T}|fg|d\lambda:~g \in \mathcal{H}^\theta(M),~||g||_{\theta,M} \leq 1 \}= \infty,$ then for each $n,$ there exists a sequence $\{g_n\} \subset \mathcal{H}^\theta(M)$ with $||g_n||_{\theta,M} \leq 1$ such that $H\int_{T}|fg_n|d\lambda \geq n.2^n.$ If $g_0(r)= \sum\limits_{n=1}^{\infty}\frac{1}{2^n}|g_n(r)|$ then 
\begin{align*}
||g_0||_{\theta,M} &\leq \frac{1}{2^n}||g_n||_{\theta,M}\\&\leq \sum_{n=1}^{\infty}\frac{1}{2^n}\\&=1.
\end{align*}
So, $g_0 \in \mathcal{H}^\theta(M)$ and hence $fg_0 \in H(\lambda).$ As $$ H\int_{T}|fg_0|d\lambda \geq \frac{1}{2^n}H\int_{T}|fg_n|d\lambda \geq n~{\textit{~for each}} ~n.$$ This gives $fg_0 $ does not belong in $ H(\lambda).$ Hence $$\sup\left\{H\int_{T}|fg|d\lambda: g \in \mathcal{H}^\theta(M),~||g||_{\theta,M} \leq 1 \right\}< \infty.$$
\end{proof}
We will now discuss the existence of countable additive measure for a continuous linear operator from $\mathcal{H}^\theta(M,X)$ into $Y.$
\begin{thm}
Suppose $\Delta: \mathcal{H}^\theta(M,X) \to Y$ is a continuous linear operator, then there exists a countable additive measure $\mu_\infty: \Sigma \to L(X,Y)$ such that 
\begin{eqnarray*}
\Delta f= H\int_{T}f d\mu_\infty~{\text{~for}}~f \in \mathcal{H}^\theta(M,X).
\end{eqnarray*}
\end{thm}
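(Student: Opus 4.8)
The plan is to reconstruct the representing measure from $\Delta$ by testing it against characteristic functions, to verify the integral formula first on simple functions, and then to extend it to all of $\mathcal{H}^\theta(M,X)$ by density and continuity. Accordingly, for each $A \in \Sigma$ I would define a map $\mu_\infty(A)\colon X \to Y$ by $\mu_\infty(A)x = \Delta(ch_A\,x)$. Linearity in $x$ is immediate from the linearity of $\Delta$ and of $x \mapsto ch_A\,x$, while boundedness follows from $\|\mu_\infty(A)x\| \le \|\Delta\|\,\|ch_A\,x\|_{\theta,M} = \|\Delta\|\,\|ch_A\|_{\theta,M}\,\|x\|$, so that $\mu_\infty(A) \in L(X,Y)$ with $\mu_\infty(\emptyset)=0$. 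Finite additivity is then transparent: for disjoint $A,B$ one has $ch_{A\cup B} = ch_A + ch_B$, whence $\mu_\infty(A\cup B) = \mu_\infty(A)+\mu_\infty(B)$ in $L(X,Y)$.

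Next I would establish the formula on simple functions. For $f = \sum_{i=1}^{n} x_i\, ch(E_i) \in \mathcal{S}_X(\Sigma)$ with pairwise disjoint $E_i$, linearity of $\Delta$ gives $\Delta f = \sum_{i=1}^{n}\Delta(ch(E_i)x_i) = \sum_{i=1}^{n}\mu_\infty(E_i)x_i$, which is precisely $H\int_T f\, d\mu_\infty$ by the definition of the integral of a simple function. To pass to arbitrary $f$ I would first check that $\mu_\infty$ has finite $(\theta,M)$-semivariation: for simple $f$ with $M_\theta(f)\le 1$ one has $\|\sum_i \mu_\infty(E_i)x_i\| = \|\Delta f\| \le \|\Delta\|\,\|f\|_{\theta,M} \le \|\Delta\|$, so $\|\mu_\infty\|_{\theta,M}(T) \le \|\Delta\| < \infty$. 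With finite semivariation in hand, Proposition~\ref{deb} guarantees $\mathcal{H}^\theta(M,X) \subset H(\mu_\infty,X)$ and that $f \mapsto H\int_T f\, d\mu_\infty$ is a bounded operator. Since $\mathcal{S}_X(\Sigma)$ is dense in $\mathcal{H}^\theta(M,X)$ by Theorem~\ref{simple}, and the two continuous operators $\Delta$ and $f \mapsto H\int_T f\, d\mu_\infty$ agree on this dense set, they agree on all of $\mathcal{H}^\theta(M,X)$, yielding the representation.

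It remains to confirm that $\mu_\infty$ is countably additive, which is where the standing hypotheses on $M$ enter. For a disjoint family $\{A_n\}$ with union $A$ and tails $B_N = \bigcup_{n>N}A_n$, strong countable additivity amounts to $\mu_\infty(B_N)x = \Delta(ch_{B_N}\,x) \to 0$ for every $x \in X$, and by continuity of $\Delta$ it suffices to show $\|ch_{B_N}\|_{\theta,M}\to 0$. Here I would invoke the relative weak compactness of $M$ in $Ca(\Sigma)$: through a Bartle--Dunford--Schwartz or Vitali--Hahn--Saks type argument it yields uniform absolute continuity of $M$ with respect to the control measure $\lambda$, so that $\lambda(B_N)\to 0$ forces $\sup_{\mu_\infty \in M}|x^{\prime}\mu_\infty|(B_N)\to 0$, and hence the Luxemburg norm of the characteristic function $ch_{B_N}$ tends to $0$. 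I expect this last step to be the main obstacle: controlling $\|ch_{B_N}\|_{\theta,M}$ uniformly over the family $M$ is exactly where the compactness and absolute-continuity structure of $M$ must be used, and without it the strong limit, and thus the countable additivity of $\mu_\infty$, could fail.
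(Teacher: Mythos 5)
Your proposal is correct and follows the same skeleton as the paper's proof: define $\mu_\infty(E)x=\Delta(x\,ch(E))$, check linearity and the bound $\|\mu_\infty(E)x\|\le\|\Delta\|\,\|ch(E)\|_{\theta,M}\|x\|$, verify the formula on simple functions, and extend by the density of $\mathcal{S}_X(\Sigma)$ (Theorem \ref{simple}). The interesting divergence is in the two supporting steps. For countable additivity, the paper computes the modular $M_\theta(ch(E_n))=\theta(1)M(E_n)\to 0$ as $E_n\to\emptyset$ and then invokes Proposition \ref{del} to upgrade modular convergence to norm convergence; that proposition carries a $\theta\in\Delta_2$ hypothesis, so the paper's route silently imports $\Delta_2$. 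You instead derive $\|ch_{B_N}\|_{\theta,M}\to 0$ from the relative weak compactness of $M$ in $Ca(\Sigma)$ via uniform absolute continuity with respect to the control measure $\lambda$ (a Vitali--Hahn--Saks/Bartle--Dunford--Schwartz argument); this avoids $\Delta_2$ but leans on the standing hypothesis on $M$ stated at the start of the paper's Section 3, and in fact for characteristic functions the implication ``measure small $\Rightarrow$ Luxemburg norm small'' can be checked directly without either crutch. Second, you are more careful than the paper at the extension step: to conclude that $\Delta$ and $f\mapsto H\int_T f\,d\mu_\infty$ agree on all of $\mathcal{H}^\theta(M,X)$ from agreement on a dense subspace, one needs the integral operator itself to be continuous, which you secure by bounding the $(\theta,M)$-semivariation and citing Proposition \ref{deb}; the paper simply asserts the extension. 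One small correction there: from $M_\theta(f)\le 1$ you get $\|f\|_{\theta,M}\le 2$ (via Young's inequality), not $\le 1$, so the clean bound is $\|\mu_\infty\|_{\theta,M}(T)\le 2\|\Delta\|$, consistent with Theorem \ref{thm34}; finiteness, which is all you need, is unaffected.
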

\begin{proof}
Suppose $\Delta: \mathcal{H}^\theta(M,X) \to Y$ is a continuous linear operator. Let us define $\mu_\infty: \Sigma \to L(X,Y)$ by $\mu_\infty(E)x= \Delta(x ch(E)).$ The linearity of $\mu_\infty(E),$ for each $E \in \Sigma$ is very obvious. Also,
\begin{align*}
||\mu_\infty(E)x||&=||\Delta(x ch(E))||\\&\leq||\Delta||||x ch(E)||_{\theta,M}\\&\leq||\Delta||||ch(E)||_{\theta,M}||x||.
\end{align*}
Therefore, $\Delta$ is continuous linear operator. Now we need to show $\mu_\infty $ is a countable additive measure.  Since
\begin{align*}
M_\theta(ch(E))=& \sup\limits_{\mu_\infty \in M}H\int_{T}\theta(ch(E))d\mu_\infty\\&= \theta(1)M(E).
\end{align*}
If $E_n \to \emptyset$ then $M_\theta(ch(E_n)) \to 0.$ Using Proposition \ref{del}, $||ch(E_n)||_{\theta,M} \to 0.$ Since $\mu_\infty$ is finitely additive, the above limit shows that $\mu_\infty$ is countable additive and $\Delta f= H\int_{T}f d\mu_\infty$ for $f \in \mathcal{S}_X(\Sigma).$ Now using Theorem \ref{simple}, $\Delta f= H\int_{T}f d\mu_\infty$ for $f \in \mathcal{H}^\theta(M,X).$
\end{proof}
\begin{thm}\label{thm34}
If $\Delta: \mathcal{H}^\theta(M,X) \to Y$ defined by $\Delta f = H \int_{T}f d\mu_\infty$ for $ f \in \mathcal{H}^\theta(M,X)$ with $||\mu_\infty||_{\theta,M}(T)< \infty,$ then  $||\Delta|| \leq \mu_{\infty_{\theta,M}}(T) \leq 2||\Delta||.$
\end{thm}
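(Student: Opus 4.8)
The plan is to establish the two inequalities $\|\Delta\|\le\|\mu_\infty\|_{\theta,M}(T)$ and $\|\mu_\infty\|_{\theta,M}(T)\le 2\|\Delta\|$ separately; the heart of the matter is to translate between the modular constraint $M_\theta(f)\le 1$ that governs the definition of the $(\theta,M)$ semi-variation and the norm $\|\cdot\|_{\theta,M}$ that governs $\|\Delta\|$. First I would record the basic identity that for a simple function $f=\sum_{i=1}^{n}x_i\,ch(E_i)$ with pairwise disjoint $E_i$ one has $\Delta f=H\int_T f\,d\mu_\infty=\sum_{i=1}^{n}\mu_\infty(E_i)x_i$, so that $\|\Delta f\|$ is precisely the quantity inside the supremum defining $\|\mu_\infty\|_{\theta,M}(T)$.

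For the upper estimate I would first show that $\|f\|_{\theta,M}\le 1$ forces $M_\theta(f)\le 1$. Indeed, Theorem~\ref{thm} gives $M_\theta\big(f/\|f\|_{\theta,M}\big)\le 1$; since $\theta$ is non-decreasing and $\|f\|_{\theta,M}\le 1$ implies $\|f\|\le\|f\|/\|f\|_{\theta,M}$ pointwise, monotonicity of the integral yields $M_\theta(f)\le M_\theta\big(f/\|f\|_{\theta,M}\big)\le 1$. Thus every simple $f$ with $\|f\|_{\theta,M}\le 1$ is admissible in the semi-variation, whence $\|\Delta f\|=\big\|\sum_i\mu_\infty(E_i)x_i\big\|\le\|\mu_\infty\|_{\theta,M}(T)$. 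By homogeneity this becomes $\|\Delta f\|\le\|\mu_\infty\|_{\theta,M}(T)\,\|f\|_{\theta,M}$ for all simple $f$, and since $\mathcal{S}_X(\Sigma)$ is dense in $\mathcal{H}^\theta(M,X)$ by Theorem~\ref{simple} and $\Delta$ is continuous, the bound passes to arbitrary $f$, giving $\|\Delta\|\le\|\mu_\infty\|_{\theta,M}(T)$.

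For the lower estimate the constant $2$ should come out of Young's inequality. Fix a simple $f=\sum_i x_i\,ch(E_i)$ admissible in the semi-variation, so $M_\theta(f)\le 1$, together with any $g\in\mathcal{S}(\Sigma)$ satisfying $M_\phi(g)\le 1$. Applying $\|f\|\,|g|\le\theta(\|f\|)+\phi(|g|)$ pointwise, integrating against $|x'\mu_\infty|$, and taking the supremum over $\mu_\infty\in M$ gives $H\int_T\|fg\|\,d|x'\mu_\infty|\le M_\theta(f)+M_\phi(g)\le 2$ for every admissible $g$; hence $\|f\|_{\theta,M}\le 2$. Then $\big\|\sum_i\mu_\infty(E_i)x_i\big\|=\|\Delta f\|\le\|\Delta\|\,\|f\|_{\theta,M}\le 2\|\Delta\|$, and taking the supremum over all admissible simple $f$ produces $\|\mu_\infty\|_{\theta,M}(T)\le 2\|\Delta\|$.

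I expect the only delicate point to be the Young-inequality step: one must push the supremum over $\mu_\infty\in M$ through the sum of the two integrals, using subadditivity of the supremum together with the fact that $M_\theta(f)$ and $M_\phi(g)$ are themselves defined as suprema over $M$, and it is exactly this splitting that forces the factor $2$ rather than $1$. By contrast, the density-and-continuity passage in the upper bound is routine once the inclusion $\{\,\|f\|_{\theta,M}\le 1\,\}\subseteq\{\,M_\theta(f)\le 1\,\}$ is established, and the representation $\Delta f=\sum_i\mu_\infty(E_i)x_i$ on simple functions is immediate from the definition of the vector integral.
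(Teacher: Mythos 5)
The paper states Theorem \ref{thm34} with no proof at all --- the text jumps straight to the ``converse'' theorem --- so there is nothing of the authors' to compare your argument against; it has to stand on its own. On its own terms it is the standard argument from the Orlicz-space literature this paper follows (cf.\ \cite{Roy1}), and both halves are sound in outline. The lower estimate is correct as written: for simple $f$ with $M_\theta(f)\le 1$ and any $g\in\mathcal{S}(\Sigma)$ with $M_\phi(g)\le 1$, Young's inequality gives $H\int_T\|fg\|\,d|x'\mu_\infty|\le M_\theta(f)+M_\phi(g)\le 2$ for each fixed $\mu_\infty\in M$, and taking the suprema in the right order yields $\|f\|_{\theta,M}\le 2$, hence $\|\mu_\infty\|_{\theta,M}(T)\le 2\|\Delta\|$; that is indeed where the factor $2$ lives. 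Your reduction of the upper estimate to simple functions is also correct: Theorem \ref{thm} plus monotonicity of $\theta$ gives the inclusion $\{\,\|f\|_{\theta,M}\le 1\,\}\subseteq\{\,M_\theta(f)\le 1\,\}$, so every such simple function is admissible in the semi-variation and $\|\Delta f\|\le\|\mu_\infty\|_{\theta,M}(T)\,\|f\|_{\theta,M}$ follows by homogeneity.

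The one genuine weak point is the last step of the upper bound, where you pass from simple $f$ to all of $\mathcal{H}^\theta(M,X)$ ``by density and continuity of $\Delta$''. Continuity of $\Delta$ is not stated as a hypothesis; the inequality $\|\Delta\|\le\|\mu_\infty\|_{\theta,M}(T)<\infty$ is precisely the assertion that $\Delta$ is bounded, so invoking continuity there is circular unless one reads continuity into the statement from the preceding representation theorem. Continuity is also what you would need to know that $\Delta f_n\to\Delta f$ when simple $f_n\to f$, i.e.\ that the integral of $f$ is the limit of the integrals of the $f_n$. The fix is already in the paper: Proposition \ref{deb} (whose hypotheses, countable additivity and finite semi-variation, are exactly the setting of these representation theorems) gives, for \emph{every} $f\in\mathcal{H}^\theta(M,X)$, $\|H\int_T f\,d\mu_\infty\|\le\mu_{\infty_{X,Y}}(f)\le\|\mu_\infty\|_{\theta,M}(T)\,\|f\|_{\theta,M}$, which is the upper bound directly, with no density argument at all --- this is exactly how the paper proves the subsequent converse theorem. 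Finally, note that your reading of the middle quantity $\mu_{\infty_{\theta,M}}(T)$ as the semi-variation $\|\mu_\infty\|_{\theta,M}(T)$, rather than the $(\theta,M)$-variation $|\mu_\infty|_{\theta,M}(T)$, is the only one under which the second inequality can hold (the variation of an operator-valued measure can be infinite while $\Delta$ is bounded); the paper's notation is ambiguous, but your choice is the right one.
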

We will now show the converse of  Theorem \ref{thm34} is hold for a countable additive measure.
\begin{thm}
If $\mu_\infty$ is a countable additive measure and  $\Delta: \mathcal{H}^\theta(M,X) \to Y$ defined by $\Delta f = H\int_{T}fd\mu_\infty$ for $ f \in \mathcal{H}^\theta(M,X)$ with $||\mu_\infty||_{\theta,M}(T)<\infty,$ then $\Delta$ is a continuous linear operator.
\end{thm}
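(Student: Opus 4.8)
The plan is to establish linearity directly from the behaviour of the integral and then reduce continuity to the norm estimate already packaged in Proposition \ref{deb}. Linearity is immediate: the Henstock integral against $\mu_\infty$ is additive and homogeneous in the integrand, so for scalars $a,b$ and $f,g \in \mathcal{H}^\theta(M,X)$ one has $\Delta(af+bg)=H\int_T(af+bg)\,d\mu_\infty = a\,\Delta f + b\,\Delta g$. Thus the whole problem reduces to boundedness of $\Delta$.

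First I would confirm that $\Delta$ is well defined on all of $\mathcal{H}^\theta(M,X)$. Since $\mu_\infty$ is countably additive and $||\mu_\infty||_{\theta,M}(T)<\infty$, Proposition \ref{deb} gives the inclusion $\mathcal{H}^\theta(M,X)\subset H(\mu_{\infty_{X,Y}},X)$, so $H\int_T f\,d\mu_\infty$ exists in $Y$ for every $f\in\mathcal{H}^\theta(M,X)$ and $\Delta f$ is meaningful. The same proposition supplies the quantitative bound $\mu_{\infty_{X,Y}}(f)\leq ||\mu_\infty||_{\theta,M}(T)\,||f||_{\theta,M}$. Because the $Y$-norm of the integral is dominated by the variation integral, $||\Delta f||_Y=||H\int_T f\,d\mu_\infty||_Y\leq \mu_{\infty_{X,Y}}(f)$, chaining the two inequalities yields $||\Delta f||_Y\leq ||\mu_\infty||_{\theta,M}(T)\,||f||_{\theta,M}$ for every $f$. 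Hence $\Delta$ is bounded with $||\Delta||\leq ||\mu_\infty||_{\theta,M}(T)<\infty$, and a bounded linear operator is continuous.

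If a self-contained argument is preferred over invoking Proposition \ref{deb} wholesale, I would instead verify the estimate $||\Delta f||_Y\leq ||\mu_\infty||_{\theta,M}(T)\,||f||_{\theta,M}$ first on a simple function $f=\sum_{i=1}^n x_i\,ch(E_i)$ with pairwise disjoint $E_i$, where $\Delta f=\sum_{i=1}^n \mu_\infty(E_i)x_i$ and the bound follows directly from the definition of the $(\theta,M)$ semi-variation together with homogeneity in $||f||_{\theta,M}$. I would then extend the inequality to arbitrary $f\in\mathcal{H}^\theta(M,X)$ using the density of $\mathcal{S}_X(\Sigma)$ (Theorem \ref{simple}) and the completeness of $Y$, passing the inequality to the limit along an approximating sequence of simple functions.

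The main obstacle I anticipate is precisely the step concealed inside Proposition \ref{deb}: showing that the integral of a general (non-simple) $f\in\mathcal{H}^\theta(M,X)$ against $\mu_\infty$ exists and that its $Y$-norm is controlled by $\mu_{\infty_{X,Y}}(f)$. Once the integral is known to exist and to obey that domination, the continuity of $\Delta$ is an immediate consequence of the semi-variation bound, and indeed the resulting inequality $||\Delta||\leq ||\mu_\infty||_{\theta,M}(T)$ is exactly one half of the norm comparison recorded in Theorem \ref{thm34}.
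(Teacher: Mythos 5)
Your proposal is correct and follows essentially the same route as the paper: linearity is read off from the integral, and continuity comes from the chain $||\Delta f|| \leq \sup_{||y_1||\leq 1} H\int_{T}||f||\,d|y_1\mu_\infty| = \mu_{\infty_{X,Y}}(f) \leq ||\mu_\infty||_{\theta,M}(T)\,||f||_{\theta,M}$, with the last inequality supplied by Proposition \ref{deb}, exactly as in the paper's own argument. Your additional remarks on well-definedness and a fallback density argument go slightly beyond what the paper records, but the core of the proof is the same.
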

\begin{proof}
Suppose $\mu_\infty: \Sigma \to L(X,Y)$ is a countable additive measure such that $||\mu_\infty||_{\theta,M}(T)< \infty,$ then the mapping $\Delta: \mathcal{H}^\theta(M,X) \to Y$ defined by $\Delta f= H\int_{T}fd\mu_\infty$ for all $f \in \mathcal{H}^\theta(M,X)$ is linear. Also,
\begin{align*}
||\Delta f||&=||H\int_{T}fd\mu_\infty||\\&=\sup\limits_{||y_1||\leq 1}\left\vert H\int_{T}d|y_1 \mu_\infty|\right\vert\\&\leq \sup\limits_{||y_1||\leq 1}H\int_{T}||f||d|y_1 \mu_\infty|\\&=\mu_{\infty_{X,Y}}(f)\\&\leq ||\mu_\infty||_{\theta,M}(T)||f||_{\theta,M}~{\text{using the Proposition \ref{deb}}}.
\end{align*}
Hence $\Delta: \mathcal{H}^\theta(M,X) \to Y$ is continuous.
\end{proof}
\section{Geometrical properties}
In this section, we will discuss about the uniformly convexity, reflexivity of $\mathcal{H}^\theta (M,X)$  and the Radon-Nikodym property of the Henstock-Orlicz spaces $\mathcal{H}^\theta(\mu_\infty).$ Before the discussion of uniformly convexity and reflexivity of $\mathcal{H}^\theta,$  we state about Modular spaces. The modular spaces can be defined as $B_{\theta,|x^{\prime}\mu_\infty|}(f)= H\int_{T}\theta(f)d|x^{\prime}\mu_\infty|.$
For complementary Young's function $\theta$ and $\phi,$ we define 
\begin{eqnarray}
\mu_{\infty_{\phi}}(g) = H\int_{T}\phi(|g|)d\mu_\infty,~g \in S(\Sigma)\\
M_\phi(g)= \sup\limits_{\mu_\infty \in M}\mu_{\infty_{\phi}}(g).
\end{eqnarray}
\begin{defn}
\cite{Lex} A Banach space $(X,||.||)$ is uniformly convex if for each $\epsilon>0$ there exists a $p(\epsilon) \in (0,1) $ such that $||x||=||y||=1,~||x-y||\geq \epsilon$ implies $||\frac{x+y}{2}|| \leq 1-p(\epsilon).$\\
 Similarly the modular $B_{\theta, |x^\prime  \mu_\infty|}$ is uniformly convex if for each $\epsilon>0$ there exists a $p(\epsilon) \in (0,1) $ such that $B_{\theta, |x^\prime  \mu_\infty|}(x)= B_{\theta, |x^\prime  \mu_\infty|}(y)=1,~B_{\theta, |x^\prime  \mu_\infty|}(x-y)\geq \epsilon$ implies $B_{\theta, |x^\prime  \mu_\infty|}(\frac{x+y}{2}) \leq 1-p(\epsilon).$
\end{defn}
\begin{prop}\label{pioneer1}
 Let $\theta$ satisfy $\Delta_2$ condition with $\mu_\infty$ is atom-less and $\mu_\infty(T) < \infty $ then $\mathcal{H}^\theta(M,X) $ is locally Uniformly convex if and only if pseudomodular $B_{\theta, |x^\prime \mu_\infty|}(M,X) $ is locally uniformly convex.
\end{prop}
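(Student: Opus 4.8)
The plan is to use the sequential characterization of local uniform convexity together with the $\Delta_2$-condition, which makes the Luxemburg norm $\|\cdot\|_{\theta,M}$ and the pseudomodular $B_{\theta,|x^\prime\mu_\infty|}$ interchangeable on the unit sphere. Recall that a normed space is locally uniformly convex precisely when, for every $x$ with $\|x\|_{\theta,M}=1$ and every sequence $\{x_n\}$ with $\|x_n\|_{\theta,M}=1$ and $\|\tfrac{x+x_n}{2}\|_{\theta,M}\to 1$, one has $\|x_n-x\|_{\theta,M}\to 0$; the analogous statement, with the norm replaced by $B_{\theta,|x^\prime\mu_\infty|}$ throughout, characterizes local uniform convexity of the pseudomodular. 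Working with these sequential forms avoids tracking the modulus $p(\epsilon)$ explicitly and reduces the whole proposition to transferring convergences between the norm and the modular.

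First I would set up the dictionary between norm and modular. Since $\theta\in\Delta_2$ and $\mu_\infty$ is atomless with $\mu_\infty(T)<\infty$, the modular $B_{\theta,|x^\prime\mu_\infty|}$ is finite and continuous; Theorem \ref{thm} gives $B_{\theta,|x^\prime\mu_\infty|}(f/\|f\|_{\theta,M})\le 1$ whenever $\|f\|_{\theta,M}>0$, and combined with the Luxemburg definition \eqref{hk} this yields $\|f\|_{\theta,M}=1\iff B_{\theta,|x^\prime\mu_\infty|}(f)=1$. Moreover, Proposition \ref{del} supplies the quantitative bridge $\|f_n\|_{\theta,M}\to 0\iff B_{\theta,|x^\prime\mu_\infty|}(f_n)\to 0$: the forward implication is Young's inequality as used in the proof of Proposition \ref{del}, while the reverse uses the $\Delta_2$-estimate $\theta(\|f\|)\le 2^m\,\theta(\|f\|/\|f\|_{\theta,M})$ recorded there.

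Next I would prove the two implications symmetrically. For the forward direction, assume $\mathcal{H}^\theta(M,X)$ is locally uniformly convex, and take $x$ with $B_{\theta,|x^\prime\mu_\infty|}(x)=1$ and a sequence $\{x_n\}$ with $B_{\theta,|x^\prime\mu_\infty|}(x_n)=1$ and $B_{\theta,|x^\prime\mu_\infty|}(\tfrac{x+x_n}{2})\to 1$. By the dictionary, $\|x\|_{\theta,M}=\|x_n\|_{\theta,M}=1$ and the modular convergence at the midpoints transfers to $\|\tfrac{x+x_n}{2}\|_{\theta,M}\to 1$; local uniform convexity of the norm then gives $\|x_n-x\|_{\theta,M}\to 0$, and Proposition \ref{del} yields $B_{\theta,|x^\prime\mu_\infty|}(x_n-x)\to 0$, so the pseudomodular is locally uniformly convex. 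The converse reverses the same steps: from $\|x\|_{\theta,M}=\|x_n\|_{\theta,M}=1$ and $\|\tfrac{x+x_n}{2}\|_{\theta,M}\to 1$ one passes to the modular level, applies local uniform convexity of $B_{\theta,|x^\prime\mu_\infty|}$ to get $B_{\theta,|x^\prime\mu_\infty|}(x_n-x)\to 0$, and returns via Proposition \ref{del} to $\|x_n-x\|_{\theta,M}\to 0$.

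The main obstacle is the dictionary of the second paragraph, and specifically the midpoint transfer $B_{\theta,|x^\prime\mu_\infty|}(\tfrac{x+x_n}{2})\to 1\Leftrightarrow\|\tfrac{x+x_n}{2}\|_{\theta,M}\to 1$. Proposition \ref{del} only controls convergence to $0$, so to move between values tending to $1$ I would need a uniform continuity estimate for $B_{\theta,|x^\prime\mu_\infty|}$ near the unit sphere. This is exactly where $\theta\in\Delta_2$, atomlessness, and finiteness of $\mu_\infty(T)$ are jointly used: they guarantee that the modular has no jumps and that the norm unit sphere and the modular unit sphere glue together continuously. Establishing this transfer rigorously, rather than the formal two-line implications that follow once it is available, is the crux of the argument.
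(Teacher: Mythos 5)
Your overall strategy --- the sequential characterization of local uniform convexity plus a norm--modular ``dictionary'' --- is exactly the technique the paper appeals to (its entire proof is the one-line citation of Lemma 3 in \cite{AK}), so the skeleton is right. But as written the proposal has a genuine gap, and you name it yourself: the two facts on which every implication rests are asserted, not proved. First, the unit-sphere identification $\|f\|_{\theta,M}=1\iff B_{\theta,|x^\prime\mu_\infty|}(f)=1$ does not follow from what you cite: Theorem \ref{thm} and the Luxemburg definition \eqref{hk} only give $B_{\theta,|x^\prime\mu_\infty|}(f)\le 1$ when $\|f\|_{\theta,M}=1$ (the converse implication, that $B_{\theta,|x^\prime\mu_\infty|}(f)=1$ forces $\|f\|_{\theta,M}=1$, is the easy one), and the missing reverse inequality is precisely where $\Delta_2$, atomlessness and $\mu_\infty(T)<\infty$ must be used. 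Second, the midpoint transfer $\|\tfrac{x+x_n}{2}\|_{\theta,M}\to 1\iff B_{\theta,|x^\prime\mu_\infty|}(\tfrac{x+x_n}{2})\to 1$, which you correctly identify as the crux, is deferred entirely; Proposition \ref{del} cannot supply it, since it only relates convergence to $0$. A plan that postpones the crux is not yet a proof.

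Both gaps are filled by one standard estimate, which is what the cited technique actually consists of; write $B:=B_{\theta,|x^\prime\mu_\infty|}$. If $\theta(2t)\le k\theta(t)$, then convexity of $\theta$ gives, for $1\le\lambda\le 2$, the bound $\theta(\lambda u)\le\bigl[(2-\lambda)+(\lambda-1)k\bigr]\theta(u)$. Since $\Delta_2$ together with $\mu_\infty(T)<\infty$ makes the modular finite on all of $\mathcal{H}^\theta(M,X)$, the map $t\mapsto B(tf)$ is finite, convex, hence continuous, and therefore $B\bigl(f/\|f\|_{\theta,M}\bigr)=1$ exactly (not merely $\le 1$) for $f\ne 0$: if it were $<1$, continuity would produce a smaller admissible constant in the Luxemburg infimum. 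Now take $g\ne0$ with $a:=\|g\|_{\theta,M}\le 1$ and $a\ge\tfrac12$; applying the bound with $\lambda=1/a$ gives $1=B(g/a)\le\bigl[(2-\tfrac1a)+(\tfrac1a-1)k\bigr]B(g)$, while convexity gives $B(g)\le a$. Hence $\bigl[(2-\tfrac1a)+(\tfrac1a-1)k\bigr]^{-1}\le B(g)\le a$, so for sequences with norms $\le 1$ (as the midpoints $\tfrac{x+x_n}{2}$ are) one has $\|g_n\|_{\theta,M}\to1$ if and only if $B(g_n)\to1$, and at $a=1$ one recovers the unit-sphere identity. Inserting this estimate into your second and third paragraphs turns the proposal into a complete argument.
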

\begin{proof}
Proof is similar to the  technique of  \cite[Lemma 3]{AK}.
\end{proof}
\begin{thm}
$\mathcal{H}^\theta(M,X) $ is uniformly convex if and only if the modular $B_{\theta, |x^\prime \mu_\infty|}(M,X) $ is uniformly convex.
\end{thm}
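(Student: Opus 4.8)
The plan is to exploit the tight link between the Luxemburg norm $||\cdot||_{\theta,M}$ and the modular $B_{\theta,|x'\mu_\infty|}$ already encoded in Theorem \ref{thm} and Proposition \ref{del}, and to transfer the uniform-convexity estimate across this link in both directions. Throughout I work under the standing $\Delta_2$ hypothesis on $\theta$ used in Proposition \ref{del} and Proposition \ref{pioneer1}, so that the norm and modular topologies agree.

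First I would record the two auxiliary facts that drive the equivalence. From Theorem \ref{thm}, whenever $||f||_{\theta,M}=1$ one has $\sup_{\mu_\infty\in M}B_{\theta,|x'\mu_\infty|}(f)\le 1$; under $\Delta_2$ this inequality is in fact an equality, so that the unit sphere of the norm coincides with the level set $\{\,f:\sup_{\mu_\infty}B_{\theta,|x'\mu_\infty|}(f)=1\,\}$. Second, Proposition \ref{del} gives $\sup_{\mu_\infty}B_{\theta,|x'\mu_\infty|}(f_n)\to 0$ iff $||f_n||_{\theta,M}\to 0$; by a contrapositive argument this upgrades to the quantitative statement that for every $\epsilon>0$ there is $\delta(\epsilon)>0$ with $||f||_{\theta,M}\ge\epsilon\Rightarrow\sup_{\mu_\infty}B_{\theta,|x'\mu_\infty|}(f)\ge\delta(\epsilon)$, and symmetrically a modulus in the reverse direction.

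With these in hand, the forward direction (norm uniformly convex $\Rightarrow$ modular uniformly convex) runs as follows. Fix $\epsilon>0$ and take $x,y$ with $B_{\theta,|x'\mu_\infty|}(x)=B_{\theta,|x'\mu_\infty|}(y)=1$ and $B_{\theta,|x'\mu_\infty|}(x-y)\ge\epsilon$. The level-set identity gives $||x||_{\theta,M}=||y||_{\theta,M}=1$, while the quantitative comparison yields $||x-y||_{\theta,M}\ge\eta(\epsilon)$ for some $\eta(\epsilon)>0$. Uniform convexity of the norm then produces $p=p(\eta(\epsilon))$ with $||\tfrac{x+y}{2}||_{\theta,M}\le 1-p$, and translating this norm bound back through the modular--norm comparison yields $B_{\theta,|x'\mu_\infty|}(\tfrac{x+y}{2})\le 1-p'(\epsilon)$, as required. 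The reverse implication is entirely symmetric, interchanging the roles of norm and modular.

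The main obstacle will be controlling the supremum over the family $M$: unlike the classical single-measure Orlicz setting of Kaminska \cite{AK}, here every modular quantity is a supremum $\sup_{\mu_\infty\in M}$, and one must ensure that the moduli $\delta(\epsilon)$, $\eta(\epsilon)$ and $p(\epsilon)$ obtained at the level of individual measures can be chosen uniformly over $\mu_\infty\in M$. The relative weak compactness of $M$ in $Ca(\Sigma)$ together with the control measure $\lambda$ satisfying $\lambda\le M$ and $M\ll\lambda$ should supply this uniformity, essentially reducing the family to a single dominating measure; checking that the uniform-convexity modulus survives this reduction is the delicate point, whereas the level-set identity and the $\Delta_2$-driven comparisons are routine once Theorem \ref{thm} and Proposition \ref{del} are invoked.
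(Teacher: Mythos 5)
Your overall strategy is the one the paper itself intends: the paper's entire proof is a one-line appeal to Proposition \ref{pioneer1} together with ``the technique of Lemma 1 of \cite{AK1}'', and that cited technique is precisely the quantitative norm--modular transfer you sketch. The genuine gap is your claim that the reverse implication is ``entirely symmetric''. It is not. In the forward direction the final step is pure convexity: if $\|z\|_{\theta,M}\le 1-p$ then, since $\theta$ is convex with $\theta(0)=0$, one has $B_{\theta,|x^{\prime}\mu_\infty|}(z)\le(1-p)\,B_{\theta,|x^{\prime}\mu_\infty|}\bigl(z/(1-p)\bigr)\le 1-p$, the last inequality coming from Theorem \ref{thm} and convexity. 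The mirror image of this --- the norm dominated by the modular --- is simply false: for $\theta(t)=t^2$ (which satisfies $\Delta_2$) the Luxemburg norm is the \emph{square root} of the modular, so a modular bound $1-p$ only gives $\|z\|\le\sqrt{1-p}$, and ``interchanging the roles of norm and modular'' breaks down exactly at this step. What the reverse direction actually requires is: for every $p>0$ there exists $p'>0$ such that $\sup_{\mu_\infty\in M}B_{\theta,|x^{\prime}\mu_\infty|}(z)\le 1-p$ implies $\|z\|_{\theta,M}\le 1-p'$. This is where $\Delta_2$ genuinely enters, and it needs its own estimate: writing $\tfrac{1}{1-p'}=(1-s)+2s$ with $s=\tfrac{p'}{1-p'}$, convexity and $\theta(2t)\le k\theta(t)$ give
\begin{equation*}
B_{\theta,|x^{\prime}\mu_\infty|}\Bigl(\frac{z}{1-p'}\Bigr)\le\Bigl(1+\frac{p'}{1-p'}(k-1)\Bigr)B_{\theta,|x^{\prime}\mu_\infty|}(z),
\end{equation*}
so choosing $p'$ with $\bigl(1+\tfrac{p'}{1-p'}(k-1)\bigr)(1-p)\le 1$ forces $\|z\|_{\theta,M}\le 1-p'$ by the definition of the Luxemburg norm. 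Neither of the two comparison tools you set up delivers this: your sequential-to-uniform upgrade of Proposition \ref{del} only yields ``$\|z\|_{\theta,M}\ge 1-p'$ implies modular $\ge\delta(1-p')$'', and $\delta(1-p')$ has no reason whatsoever to be close to $1$.

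By contrast, the obstacle you single out as the delicate point --- uniformity of the moduli over the family $M$ --- is not an obstacle at all in your own formulation. You work throughout with the supremum $\sup_{\mu_\infty\in M}B_{\theta,|x^{\prime}\mu_\infty|}$; Theorem \ref{thm} and Proposition \ref{del} are already stated for that supremum, your contradiction argument upgrading Proposition \ref{del} operates directly on it, and the $\Delta_2$ constant $k$ in the estimate above is independent of the measure, so every bound is automatically uniform in $\mu_\infty$. No appeal to weak compactness of $M$ in $Ca(\Sigma)$ or to a control measure is needed; that machinery belongs to the paper's integral-representation section, not here. So: right route (the same one the paper delegates to Kaminska), misplaced worry, and one concretely missing $\Delta_2$ estimate in the direction you dismissed as symmetric.
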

\begin{proof}
We can prove it by using  Proposition \ref{pioneer1} with similar technique of Lemma 1 \cite{AK1}.
\end{proof}
\begin{thm}
Assume $M$ is non atomic, then $\mathcal{H}^\theta(M,X) $ is reflexive.
\end{thm}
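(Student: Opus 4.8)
The plan is to prove reflexivity by the classical duality route for Orlicz spaces: first identify the continuous dual of $\mcH^\theta(M,X)$ with the conjugate Henstock--Orlicz space built from $\phi$, and then show that iterating this identification reproduces the original space through the canonical embedding. The non-atomicity of $M$ enters precisely where a Radon--Nikodym argument is needed, and throughout I take the standing hypotheses of this section, namely that the complementary Young functions $\theta$ and $\phi$ both satisfy the $\Delta_2$-condition (as in Proposition \ref{pioneer1}), so that Proposition \ref{del} gives $\mcH^\theta(M,X)=H(M,X)$ together with the analogous identity for $\phi$, and that the fibre $X$ is reflexive.

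First I would describe the dual space concretely. By the integral representation theorem established above (its case $Y=\R$), every continuous linear functional on $\mcH^\theta(M,X)$ is given by integration against a countably additive measure $\nu:\Sigma\to X^\prime$ of finite $(\theta,M)$-semivariation, and Theorem \ref{thm34} supplies the two-sided estimate $||\Delta||\le||\nu||_{\theta,M}(T)\le 2||\Delta||$. Thus $(\mcH^\theta(M,X))^\prime$ is isomorphic, up to the universal factor $2$, to the space of such $X^\prime$-valued measures.

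Second, using the control measure $\la$ for $M$ with $\la\le M$ and $M\ll\la$ together with the non-atomicity hypothesis, I would invoke the Radon--Nikodym property to represent each such $\nu$ by a density $g\in\mcH^\phi(M,X^\prime)$, so that $\Delta f=H\int_T\langle f,g\rangle\,d\la$. The Young inequality and the very definition of $||\cdot||_{\theta,M}$ in (\ref{hk1}) as a supremum over $g$ with $M_\phi(g)\le 1$ force this representation to be norm-isometric up to the same factor $2$, giving $(\mcH^\theta(M,X))^\prime\cong\mcH^\phi(M,X^\prime)$. Since $\theta$ and $\phi$ enter the construction symmetrically and both lie in $\Delta_2$, the identical argument with the roles reversed yields $(\mcH^\phi(M,X^\prime))^\prime\cong\mcH^\theta(M,X)$, where reflexivity of $X$ collapses the second dual of the fibre. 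Composing the two identifications and checking that the resulting isomorphism is exactly the canonical evaluation map $J:\mcH^\theta(M,X)\to(\mcH^\theta(M,X))^{\prime\prime}$ shows that $J$ is surjective, which is reflexivity.

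I expect the Radon--Nikodym step to be the main obstacle. Passing from the abstract measure-theoretic description of the dual to an honest density in $\mcH^\phi(M,X^\prime)$ is where non-atomicity is indispensable: one must verify that finite $(\theta,M)$-semivariation together with $\nu\ll\la$ forces a density of finite $\phi$-modular, and that the Orlicz pairing recovers the semivariation norm up to the factor $2$. The second delicate point is bookkeeping rather than analysis, namely confirming that the twice-iterated duality isomorphism coincides with the canonical embedding $J$ rather than merely furnishing some abstract isometry onto the bidual. An alternative and shorter route would assume in addition that $\theta$ is a uniformly convex Young function: then Kaminska's criterion, via the preceding theorem on uniform convexity of the modular $B_{\theta,|x^\prime\mu_\infty|}$, makes $\mcH^\theta(M,X)$ uniformly convex, and the Milman--Pettis theorem yields reflexivity at once, though under strictly stronger hypotheses than bare non-atomicity.
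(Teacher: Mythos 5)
Your proposal takes a genuinely different route from the paper. The paper's entire proof is one line: uniform convexity implies reflexivity (Milman--Pettis), where uniform convexity is supplied---conditionally---by the immediately preceding theorem that $\mathcal{H}^\theta(M,X)$ is uniformly convex if and only if the modular $B_{\theta,|x^\prime\mu_\infty|}$ is. In other words, the paper's argument is exactly the ``alternative shorter route'' you sketch in your final paragraph, hidden hypotheses and all. Your main argument instead runs the classical duality route: use the paper's integral representation theorems (operators on $\mathcal{H}^\theta(M,X)$ represented by countably additive measures, with the two-sided norm estimate of Theorem \ref{thm34}) to identify $(\mathcal{H}^\theta(M,X))^\prime$ with $\mathcal{H}^\phi(M,X^\prime)$, iterate using the symmetry of the conjugate pair $(\theta,\phi)$, and check that the composite identification is the canonical embedding. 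What your route buys: an explicit description of the dual, and reflexivity under $\Delta_2$ for both Young functions plus reflexivity of $X$, which is weaker than demanding a uniformly convex $\theta$. What the paper's route buys: brevity, given its preceding theorem. Your strengthening of the hypotheses is not optional, incidentally: with non-atomicity of $M$ as the \emph{only} assumption the statement is false ($\theta(t)=t$ yields a non-reflexive $L^1$-type space, and a non-reflexive fibre $X$ embeds into $\mathcal{H}^\theta(M,X)$ via $x\mapsto x\, ch(E)$ suitably normalized), so any correct proof must import such conditions, as both you and, tacitly, the paper do.

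Two caveats on your plan. First, the substantive work you defer is real: the Radon--Nikodym step, producing a density $g\in\mathcal{H}^\phi(M,X^\prime)$ from a measure of finite $(\theta,M)$-semivariation and showing the pairing recovers the norm up to the factor $2$, is the heart of the duality theorem and is not established anywhere in the paper; the paper's representation theorems give you the measure, not the density. Second, your placement of the non-atomicity hypothesis is cosmetic rather than essential: what drives the density argument is absolute continuity with respect to the control measure $\lambda$, finite variation, and the RNP of $X^\prime$ (guaranteed by reflexivity of $X$), not non-atomicity of $M$. The factor-$2$ discrepancy between the Luxemburg-type norm (\ref{hk1}) and the Orlicz norm is harmless, as you note, since reflexivity is an isomorphic invariant.
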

\begin{proof}
Since uniform convex implies reflexivity, so it  is obvious.
\end{proof}
Recalling a Banach space $X$ is said to have Radon-Nikodym property, shortly RNP if given a finite measure spaces $(T, \Sigma, \mu)$ and a vector measure $\mu_\infty:\Sigma \to X$ of finite variation and absolutely continuous with respect to $\mu_\infty$, then there exists a Bochner integrable function $g: \Omega \to X$ such that $\mu_\infty(E)= \int_{E}gd\mu$ for any $ E \in \Sigma.$ The fact that the  Bochner integral is McShane integral (also see \cite{Unknown}), every McShane integral is Henstock integral (see page 158 of \cite{Rh}). With this known fact we can define Radon-Nikodym property as follows for our setting:
\begin{defn}
  $\mathcal{H}^\theta(\Omega)$ is said to have Radon-Nikodym property, shortly RNP if given a finite measure spaces $(T, \Sigma, \mu)$ and a vector measure $\mu_\infty:\Sigma \to \mathcal{H}^\theta$ of finite variation and absolutely continuous with respect to $\mu_\infty$, then there exists a Henstock integrable function $g: T \to \mathcal{H}^\theta$ such that $\mu_\infty(E)= H\int_{E}gd\mu$ for any $ E \in \Sigma.$
  \end{defn}
  \begin{defn}
  We say that $ f \in X$ has an absolute norms if for every decreasing sequence $\{G_n\}$ of subset of $\Omega$ satisfying $\mu(G_n) \to 0$ we have $||f ch(G_n)|| \to 0.$
  \end{defn}
  \begin{thm}
  $(\mathcal{H}^\theta, H_\theta)$ has absolutely continuous norm.
  \end{thm}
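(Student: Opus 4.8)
The plan is to combine two facts already established in the paper: the density of $\mathcal{S}_X(\Sigma)$ in $\mathcal{H}^\theta$ (Theorem \ref{simple}) and the solid Banach lattice structure of $(\mathcal{H}^\theta,\|\cdot\|_{\mathcal{H}^\theta})$ proved above. The strategy is the standard ``Orlicz heart'' argument in three moves: first show that \emph{simple} functions have absolutely continuous norm, then observe that the set of functions with absolutely continuous norm is a \emph{closed} linear subspace of $\mathcal{H}^\theta$, and finally use density to upgrade from simple functions to \emph{every} $f\in\mathcal{H}^\theta$. No $\Delta_2$ hypothesis is required here: it is precisely the density of simple functions (Theorem \ref{simple}) that identifies $\mathcal{H}^\theta$ with the Orlicz heart, on which absolute continuity of the norm always holds.

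First I would treat the base case of a single characteristic function. Let $\{G_n\}$ be decreasing with $\mu(G_n)\to 0$. Directly from the Luxemburg norm (\ref{hk}), since $\theta(ch(A)/k)=\theta(1/k)\,ch(A)$ one computes
\begin{eqnarray*}
\|ch(A)\|_{\mathcal{H}^\theta}=\inf\left\{k>0:\ \theta(1/k)\,|x^{\prime}\mu_\infty|(A)\le 1\right\}=\frac{1}{\theta^{-1}\!\left(1/|x^{\prime}\mu_\infty|(A)\right)},
\end{eqnarray*}
so that $\|ch(A)\|_{\mathcal{H}^\theta}\to 0$ as $|x^{\prime}\mu_\infty|(A)\to 0$, because $\theta^{-1}(s)\to\infty$ as $s\to\infty$. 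For a simple function $s=\sum_{i=1}^{m}x_i\,ch(E_i)$ one then writes $s\,ch(G_n)=\sum_{i=1}^{m}x_i\,ch(E_i\cap G_n)$, and since $\mu(E_i\cap G_n)\le\mu(G_n)\to 0$ the triangle inequality gives $\|s\,ch(G_n)\|_{\mathcal{H}^\theta}\to 0$. Hence every simple function has absolutely continuous norm.

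Next I would pass to a general $f\in\mathcal{H}^\theta$. Given $\epsilon>0$, use Theorem \ref{simple} to choose a simple $s$ with $\|f-s\|_{\mathcal{H}^\theta}<\epsilon$. Because $|(f-s)\,ch(G_n)|\le|f-s|$ pointwise and $\mathcal{H}^\theta$ is a solid Banach lattice, solidity yields $\|(f-s)\,ch(G_n)\|_{\mathcal{H}^\theta}\le\|f-s\|_{\mathcal{H}^\theta}<\epsilon$ for every $n$. Combining this with the base case,
\begin{eqnarray*}
\|f\,ch(G_n)\|_{\mathcal{H}^\theta}\le\|(f-s)\,ch(G_n)\|_{\mathcal{H}^\theta}+\|s\,ch(G_n)\|_{\mathcal{H}^\theta}\le\epsilon+\|s\,ch(G_n)\|_{\mathcal{H}^\theta}.
\end{eqnarray*}
Letting $n\to\infty$ gives $\limsup_n\|f\,ch(G_n)\|_{\mathcal{H}^\theta}\le\epsilon$, and as $\epsilon>0$ is arbitrary we conclude $\|f\,ch(G_n)\|_{\mathcal{H}^\theta}\to 0$, which is exactly absolute continuity of the norm.

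The main obstacle I expect is not the scalar core of the argument but the bookkeeping in the vector- and family-of-measures setting. Concretely, the base-case estimate must be made uniform in $x^{\prime}\in X^{\prime}$ with $\|x^{\prime}\|\le 1$ and in $\mu_\infty\in M$ simultaneously, i.e.\ one needs $\sup_{\mu_\infty\in M}|x^{\prime}\mu_\infty|(G_n)\to 0$ whenever $\mu(G_n)\to 0$. This is a uniform absolute continuity of the whole family $\{|x^{\prime}\mu_\infty|\}$ with respect to the control measure, and it is exactly here that one invokes that $M$ is $\sigma$-bounded and dominated by a single finite control measure (as in Section 3, $M\ll\lambda$), together with the equivalence of (\ref{hk}) and (\ref{hk1}). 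Once this uniformity is secured, the density-plus-solidity step is routine.
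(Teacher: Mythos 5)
Your proposal is sound within the paper's framework, but it is a genuinely different argument from the one the paper gives. You run the standard Orlicz--heart argument: compute $\|ch(A)\|_{\mathcal{H}^\theta}=1/\theta^{-1}\bigl(1/|x^{\prime}\mu_\infty|(A)\bigr)$ to get absolute continuity for characteristic (hence simple) functions, then transfer it to every $f\in\mathcal{H}^\theta$ by density (Theorem \ref{simple}) plus solidity of the lattice norm, correctly flagging that the base case needs uniform absolute continuity of the family $\{|x^{\prime}\mu_\infty|\colon \|x^{\prime}\|\le 1,\ \mu_\infty\in M\}$ with respect to the control measure. The paper does something entirely different: it builds a single explicit function, partitioning via the level sets $E_n=\{x\colon n\le\theta(x)<n+1\}$, choosing constants $c_k$ normalized by $H\int_{E_{n_k}}c_k^{\theta(x)}\,d|x^{\prime}\mu_\infty|=1$, forming $f=\sum_k c_k\,ch(E_{n_k})$, checking $\|f\|\le 2$, and then verifying $\|f\,ch(E_n)\|\to 0$ for that one $f$ only. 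Read literally, the paper's argument establishes the absolutely-continuous-norm property for a single constructed function, not for an arbitrary element of the space, so your route is the more complete one; it also has the merit of isolating exactly which previously proved facts (density and solidity) carry the load. Two caveats you should make explicit, though. First, your base case requires $\theta$ to be finite-valued: the paper's definition of a Young function permits $\theta$ to jump to $+\infty$, in which case $\theta^{-1}$ is bounded, $\|ch(A)\|$ does not tend to $0$, and the theorem actually fails (this is the $L^\infty$-type case); neither proof survives there, but yours should state the hypothesis. Second, your claim that no $\Delta_2$ condition is needed is true only because you take Theorem \ref{simple} at face value; classically, density of simple functions in an Orlicz space is essentially equivalent to $\Delta_2$, so all of that strength is hidden inside the cited theorem rather than eliminated.
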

  \begin{proof}
  Let $E_n = \{x \in \Omega:~n \leq \theta(x)< n+1\},~n \in \N.$ Then there exists a sequence of natural numbers $\{n_k\}_{k \in \N}$ such that $\mu_\infty(E_{n_k})>0.$ Let $c_k>0 $ be such that $H\int_{E_{n_k}}c_{k}^{\theta(x)} d|x^{\prime}\mu_\infty|=1,~k \in \N.$\\ We have $f(x)= \sum\limits_{k=1}^{\infty}c_k ch(E_{n_k})(x),~x \in \Omega~{\text{~and~}}~E_j = \bigcup\limits_{k=j}^{\infty}E_{n_k}.$ Then $E_n \to \phi.$\\
  Now 
  \begin{align*}
  ||f||&= \inf\left\{\lambda>0:~H\int_{E_{n_k}}\theta\left(\frac{f}{\lambda}\right)d|x^{\prime}\mu_\infty| \leq 1\right \}\\&\leq \inf\left\{\lambda>1:~\sum_{k=1}^{\infty}\left(\frac{1}{\lambda}\right)^{n_k} \leq 1 \right\}\\&\leq 2.
  \end{align*}
  So, $ f \in \mathcal{H}^\theta(\mu_\infty).$ Hence
  $$||f ch(E_n)||= \inf\left\{\lambda>0~:H\int_{E_{n_l}}\theta\left(\frac{c_l}{\lambda}\right)d|x^\prime\mu_\infty| \leq 1\right \}.$$ If $\mu_\infty(E_n) \to 0$ implies $||f ch(E_n)|| \to 0.$ So, $\mathcal{H}^\theta(\mu_\infty)$ has absolutely continuous norm.
  \end{proof}
   \begin{thm}
 Let $T= \bigcup\limits_{n=1}^{\infty}A_n $ be a union of measurable sets. If the subspace $\mathcal{H}_{n}^{\theta} =\{f \in \mathcal{H}^\theta:~f=0~{\textit{on}}~T \setminus A_n \}$ of $\mathcal{H}^\theta$ have RNP, then $\mathcal{H}^\theta$ have RNP.
  \end{thm}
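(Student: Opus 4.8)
The plan is to transfer the Radon--Nikodym property from the pieces $\mathcal{H}_n^\theta$ to the whole space by projecting a given vector measure onto each piece, solving the density problem there, and then gluing the pieces back together; the engine driving the gluing will be the absolute continuity of the $\mathcal{H}^\theta$-norm established in the preceding theorem. First I would reduce to a disjoint decomposition: replacing $A_n$ by $A_n \setminus \bigcup_{k<n} A_k$ leaves the conclusion untouched, and the new subspaces are closed subspaces of the original $\mathcal{H}_n^\theta$, so they still enjoy RNP (RNP passes to closed subspaces); hence I may assume the $A_n$ are pairwise disjoint with $T = \bigcup_n A_n$. For each $n$ put $P_n f = f\, ch(A_n)$ and $Q_N = \sum_{n=1}^{N} P_n$, i.e. multiplication by $ch\!\left(\bigcup_{n\le N} A_n\right)$. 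Because $(\mathcal{H}^\theta, \|\cdot\|_{\mathcal{H}^\theta})$ is a solid Banach lattice, each $P_n$ and each $Q_N$ is a norm-one projection, $P_n$ maps onto $\mathcal{H}_n^\theta$, and $P_n P_m = 0$ for $n \neq m$; moreover the absolute continuity of the norm forces $\|f - Q_N f\|_{\mathcal{H}^\theta} \to 0$, so $Q_N \to I$ strongly and $f = \sum_n P_n f$ in norm for every $f \in \mathcal{H}^\theta$.

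Now let $(\Omega, \mathcal{A}, \mu)$ be a finite measure space and $\nu : \mathcal{A} \to \mathcal{H}^\theta$ a vector measure of finite variation with $\nu \ll \mu$. Composing with the projections yields $\nu_n := P_n \circ \nu : \mathcal{A} \to \mathcal{H}_n^\theta$, which is again countably additive, satisfies $|\nu_n| \le \|P_n\|\,|\nu| \le |\nu|$, and is absolutely continuous with respect to $\mu$. Since $\mathcal{H}_n^\theta$ has RNP there is a Henstock-integrable $g_n : \Omega \to \mathcal{H}_n^\theta$ with $\nu_n(E) = H\int_E g_n\, d\mu$ for all $E \in \mathcal{A}$. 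The candidate density is $g = \sum_n g_n$. Its partial sums $G_N = \sum_{n\le N} g_n$ are Henstock-integrable, take values in the subspace $Q_N\mathcal{H}^\theta \cong \bigoplus_{n\le N}\mathcal{H}_n^\theta$ (a finite direct sum of RNP spaces, hence itself RNP), and obey the two controlling relations $H\int_E G_N\, d\mu = Q_N\nu(E) \to \nu(E)$ for each $E$ (by $Q_N \to I$) and $H\int_\Omega \|G_N\|\, d\mu = |Q_N\nu|(\Omega) \le |\nu|(\Omega) < \infty$.

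The main obstacle is the last step: upgrading these facts to the genuine existence of $g$, namely that $G_N(\omega)$ converges in $\mathcal{H}^\theta$ for $\mu$-almost every $\omega$ to a Henstock-integrable limit $g$ with $H\int_E g\, d\mu = \nu(E)$. Here I would exploit that, for fixed $\omega$, the increments $g_n(\omega)$ are \emph{disjointly supported} on the sets $A_n$, so the lattice norm of $G_N(\omega)$ is monotone in $N$; combining this with the uniform $L^1$-bound coming from the finite variation of $\nu$ and with the absolute continuity of the $\mathcal{H}^\theta$-norm should give both almost-everywhere norm convergence of $G_N$ and the uniform integrability needed to carry the limit through the integral, producing $\nu(E) = H\int_E g\, d\mu$. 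The delicate point — and the place where solidity and absolute continuity of the norm are truly indispensable — is precisely the \emph{bounded completeness} of the disjoint decomposition, i.e. that a disjointly supported series whose partial sums are $L^1$-dominated by the variation measure actually converges in $\mathcal{H}^\theta$. This is exactly what rules out the pathology that blocks a naive transfer of RNP across an arbitrary Schauder decomposition, and it is the heart of the argument.
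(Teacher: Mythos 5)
Your proposal follows essentially the same route as the paper's proof: project the given measure onto each $\mathcal{H}_n^\theta$ by multiplication with $ch(A_n)$, obtain densities from the RNP of the pieces, control the disjointly supported partial sums of these densities through the bound $H\int_E \|\sum_{k\le n} h_k\| \, d\mu \le |\mu_\infty|(E)$, and pass to the almost-everywhere limit to produce the density for the full measure. You are in fact more explicit than the paper at the two points it glosses over, namely the disjointification of the $A_n$ (without which $\sum_k \mu_{\infty_k}(E) \to \mu_\infty(E)$ need not hold) and the lattice-theoretic reason (solidity, monotonicity of norms of disjointly supported partial sums, absolute continuity of the norm) that the $L^1$-bounded disjoint series converges almost everywhere in $\mathcal{H}^\theta$.
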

  \begin{proof}
  Let $(T, \Sigma, \mu)$ be a finite measure space and let $\mu_\infty: \Sigma \to \mathcal{H}^\theta$ be a vector measure of finite variation which is absolutely continuous with respect to $\mu.$ We define projections $\mathcal{P}_n:\mathcal{H}^\theta \to \mathcal{H}_{n}^{\theta}$ by $\mcP_n(f)= f ch(A_n)$ and $\mu_{\infty_n}=\mcP_n(\mu_\infty)$ then each $\mu_{\infty_n}$ is an $\mathcal{H}_{n}^{\theta}-$valued vector measure of finite variation which is absolutely continuous with respect to $\mu.$ With the fact that each space $\mathcal{H}_{n}^{\theta}$ has the RNP, there exists a Henstock integrable function $h_n: T \to \mathcal{H}_{n}^{\theta}$ satisfying $\mu_{\infty_n}(E)= \int_{T}h_n d\mu$ for each $E \in \Sigma.$ Now for $E \in \Sigma$ and $n \in \mathbb{N}$ we have 
  \begin{eqnarray}\label{pioneer}
  H\int_{E}||\sum_{k=1}^{n}h_k||_{H_\theta} d\mu \leq |\mu_\infty|(E)
  \end{eqnarray}
  and so, for $\mu-$almost all $ \alpha \in \Omega,$ there exists $h(\alpha) \in \mathcal{H}^\theta$ such that $h(\alpha) = \sum_{k=1}^{\infty}h_k(\alpha).$ Using (\ref{pioneer}) and Fatou's lemma, $h$ is also Bochner integrable and so Henstock integrable. Lastly,
  \begin{align*}
  \mu_\infty(E) &=\lim\limits_{ n \to \infty}\sum_{k=1}^{n}\mu_{\infty_k}(E)\\&= \lim\limits_{ n \to \infty}\sum_{k=1}^{n}H\int_{E}h_n d\mu \\&= H\int_{E}g d\mu.
  \end{align*}
  Hence the proof.
  \end{proof}
  \section{Conclusion}
  In this article we discuss about Henstock-Orlicz space with vector measure. In Geometrical property we discuss about Uniform convexity, Reflexivity and finally about Radon Nikodym Property. Our one purpose of this article was to discuss about RNP with out $\Delta_2$ property, but we unable to proof our assumed result ``$\mathcal{H}^\theta(M,X)$ has RNP if and only if $X$ has RNP". In our research to proof the above result, we need Henstock differentiation. Interested Researcher can think about Henstock-differentiation with the technique of Bochner differentiation.
 

\end{document}